\nonstopmode\numberwithin{equation}{section}
\newtheorem{definition}{Definition}[section]
\newtheorem{theorem}{Theorem}[section]
\newtheorem{lemma}{Lemma}[section]
\newtheorem{remark}{Remark}[section]
 \theoremstyle{plain}
\begin{document}
\title{ Fuzzy Generalizations Of Automorphism and Inner Automorphism of Groups}
\maketitle
 \begin{center}
\center{  $\text{Shiv Narain}^1, \text{Sunil Kumar}^2, \text{Sandeep Kumar}^2$ and ${\text{Gaurav Mittal}^{2, 3}}$ }
\medskip
{\footnotesize

 \center{$^1$Department of Mathematics, Arya P. G. College, Panipat, India\\$^{2}$DRDO, India \\$^{3}$Department of Mathematics, Indian Institute of Technology Roorkee, Roorkee, India\\ emails: drnarainshiv@gmail.com, sunilsangwan6174@gmail.com\\san.kuwar@gmail.com, gaurav.mittaltwins@gmail.com

 }
 }

\bigskip
\begin{abstract} The fuzzification of various concepts from classical set theory came into existence with the discovery of fuzzy set as a generalization of crisp set. This paper  coins the terms like fuzzy permutations on a set, fuzzy endomorphism, fuzzy automorphism and fuzzy inner automorphisms of groups. These new concept can be taken as fuzzy generalizations of usual groups theoretic permutations, endomorphism, automorphisms and inner automorphisms respectively. Notions of fuzzy permutations and fuzzy inner automorphism induced by a normal fuzzy subgroup are also introduced. Finally, we obtain the fuzzy analogues of well known theorems from classical group theory.
 
\end{abstract}
\end{center}
\hspace{-5mm}\subjclass {}{\textbf{AMS Subject Classifications}: 20N25}\\
\hspace{-5mm}\keywords{\textbf{Keywords:} Fuzzy map, fuzzy permutation, fuzzy homomorphism, fuzzy automorphism, fuzzy inner automorphism.}
\section{Introduction}

The study of fuzzy set was initiated by Zadeh $[1]$, since then a large number of mathematical structures like algebras, topological spaces, differential equations etc. has been fuzzified by many mathematicians. Rosenfield $[2]$ introduced the concept of fuzzy subgroup with the assumption that  subsets of the group are fuzzy. With this evolution of fuzzy group theory, various fuzzy counterparts of group theoretic concepts from classical group~~ theory were introduced by many authors. To list a few in this context, Bhattacharya and Mukharjee $[3]$ introduced the concept of fuzzy cosets and fuzzy normal subgroups and proved fuzzy analogues of various group theoretic concepts. In $[4]$,  Mukharjee and Bhattacharya  introduced notion of order of a fuzzy subgroup in a finite group, fuzzy abelian group and fuzzy solvable group. Based on fuzzy binary operations, Yuan and Lee $[5]$, proposed a new kind of fuzzy group. Yao Bingxue $[6]$ introduced the concept of fuzzy homomorphism. Motivated by this, we introduced the concept of fuzzy automorphism, fuzzy inner automorphism in groups, and fuzzy analogue of some standard results from classical group theory.

\section{Preliminaries} 
\begin{definition}Let $X$ be a non empty set. Then $\mu:X \to [0, 1]$ is called a fuzzy subset of $X$. 

\end{definition}
We denote $\mathcal{F}\mathcal{P}(X)$, the set of all fuzzy subsets of $X$ also termed as fuzzy power set of $X$.
\begin{definition}Fuzzy map: 
Let $X$ and $Y$  be two non empty sets. Then a fuzzy subset of $X \times Y$ given by $f:X\times Y \to [0, 1]$ is called a fuzzy map if for each $x \in X$, there exists a unique $y_x \in Y$ such that $f(x, y_x) = 1$. 

\end{definition}

In this case, we denote $f: X \cdots \to Y$, as a fuzzy map and $y_x$ is called  fuzzy image of $x$.  We asserts that two fuzzy maps $f$ and $g$ on a set are equal, i.e. $f \equiv g$,  if and only if for each $x \in X$, both $f$ and $g$ have same fuzzy images. 
\begin{definition}Bijective fuzzy map: Let $f: X \cdots \to Y$ be a fuzzy map. Then $f$ is said to be one-one, if for any $x_1, x_2 \in X$ and $y \in Y$, whenever $f(x_1, y)=f(x_2, y) = 1$, then $x_1 = x_2$. Further, $f$ is   called onto if for each $y \in Y$, there exists $x \in X$ such that $f(x, y) = 1$. A fuzzy map is bijective if it is one-one as well as onto. 
\end{definition}
\begin{definition}
Composition of fuzzy maps: Let $f$ and $g$ be fuzzy subsets of the set $X\times Y$ and $Z \times X$ respectively where $X, Y$ and $Z$ are non empty sets. Then  $$f\circ g: Z\times Y\to[0, 1]\ \text{defined as}\ (f\circ g)(z, y) =\sup_{a \in X}\big\{ f(a, y)\ \big| \ g(z, a) = 1\big\}.$$
If there is no such $a \in X$ such that $g(z, a) = 1$ for a given $z \in Z$, then as in $[7, \ \text{Chapter}\ 1]$, we assume $(f\circ g)(z, y) =0$.
\end{definition}

\begin{remark} If  $g$ is a fuzzy map on  $Z \times X$, then $$(f\circ g)(z, y)= f(a, y)\ \ \text{where}\ \ g(z, a) = 1.$$\end{remark}

\begin{definition}Fuzzy subgroup: 
Let $G$ be a group and $\mu \in \mathcal{F}\mathcal{P}(G)$. Then $\mu$ is called a fuzzy subgroup of $G$ if for all $x, y \in G$, we have   $\mu(xy) \geq \mu(x)\wedge \mu(y$) and $\mu(x^{-1})\geq \mu(x)$.  Further, the set of all fuzzy subgroups of $G$ is denoted by  $\mathcal{F}(G)$.

\end{definition}
\begin{definition}Normal fuzzy  subgroup: Let $\mu \in  \mathcal{F}(G)$. Then $\mu$ is called a normal fuzzy  subgroup of $G$ if $\mu(xy) = \mu(yx)$ for all $x, y \in G$.

\end{definition}
\begin{definition}Fuzzy homomorphism: A fuzzy map $f: G \cdots \to G'$ is called a fuzzy homomorphism if for all $ x_1, x_2 \in G$ and $y \in G'$, we have $$f(x_1x_2, y) = \sup_{y_1, y_2 \in G'}\big\{f(x_1, y_1)\wedge f(x_2, y_2)\ \big| \ y = y_1y_2\big\}.$$

\end{definition}

\begin{definition}Kernel of a fuzzy homomorphism: Let $f: G\cdots \to G'$ be  a fuzzy homomorphism. Then we define $$K = \text{Ker}f = \big\{x \in G\ \big|\ f(x, e') = 1\big\}.$$\end{definition}
Next theorem is a basic result on fuzzy homormorphism. We refer to $[6]$ for the proofs.
\begin{theorem}
Let $f:G\cdots \to G'$ be a fuzzy homomorphism. Then \begin{enumerate}
\item $y_{x_1x_2} = y_{x_1}\cdot y_{x_2}$ where $y_{x_i}\in G'$ denote the unique element corresponding to $x_i \in G$ for $1 \leq i \leq 2$ such that $f(x_i, y_{x_i}) = 1$.  \item $f(e, e') = 1$ where $e$ and $e'$ denote the respective identities of $G$ and $G'$. 
\item $y_x^{-1} = y_{x^{-1}}$ for any $y_x \in G'$ and $x \in G$. 
\item $f(x, y) = 1 \implies f(x^{-1}, y^{-1}) = 1$ for any $x \in G$ and $y \in G'$. 

\end{enumerate}
\end{theorem}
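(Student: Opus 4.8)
The plan is to establish part (1) first, since the remaining three parts follow as easy consequences of it together with the uniqueness clause built into the definition of a fuzzy map. The key observation is that the supremum appearing in the definition of a fuzzy homomorphism can be forced to attain the value $1$ by exhibiting a single well-chosen decomposition of the target element.

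For part (1), I would fix $x_1, x_2 \in G$ with fuzzy images $y_{x_1}, y_{x_2}$, so that $f(x_1, y_{x_1}) = 1$ and $f(x_2, y_{x_2}) = 1$ by definition of a fuzzy map. Applying the homomorphism identity at the particular element $y = y_{x_1} y_{x_2}$, the decomposition $y_1 = y_{x_1}$, $y_2 = y_{x_2}$ is admissible and contributes the term $f(x_1, y_{x_1}) \wedge f(x_2, y_{x_2}) = 1$. Since $f$ takes values in $[0,1]$, this single term already saturates the supremum, whence $f(x_1 x_2, y_{x_1} y_{x_2}) = 1$. By the uniqueness of the fuzzy image of $x_1 x_2$, this forces $y_{x_1 x_2} = y_{x_1} y_{x_2}$.

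Parts (2) and (3) then reduce to group-theoretic cancellation. For part (2), specializing part (1) to $x_1 = x_2 = e$ gives $y_e = y_e \cdot y_e$ in $G'$; cancelling one factor forces $y_e = e'$, and since $f(e, y_e) = 1$ we obtain $f(e, e') = 1$. For part (3), specializing part (1) to $x_1 = x$, $x_2 = x^{-1}$ yields $y_{e} = y_x \cdot y_{x^{-1}}$; using part (2) to identify $y_e = e'$ shows $y_{x^{-1}} = y_x^{-1}$. Finally, part (4) is a direct reading-off: if $f(x, y) = 1$, then uniqueness forces $y = y_x$, and $f(x^{-1}, y_{x^{-1}}) = 1$ holds by definition of the fuzzy image of $x^{-1}$; rewriting $y_{x^{-1}} = y_x^{-1} = y^{-1}$ via part (3) gives $f(x^{-1}, y^{-1}) = 1$.

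The only step demanding genuine care is the saturation argument in part (1): one must justify that exhibiting a single admissible pair attaining $1$ determines the supremum, and that the uniqueness of fuzzy images legitimately upgrades the conclusion $f(x_1 x_2, y_{x_1} y_{x_2}) = 1$ to the equality of the group elements $y_{x_1 x_2}$ and $y_{x_1} y_{x_2}$. Once this is in place, everything afterward is cancellation in $G'$ and bookkeeping with the definitions, so I anticipate no further obstacle.
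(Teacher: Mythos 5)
Your proposal is correct, and there is nothing in the paper to compare it against: the paper does not prove this theorem at all, but instead defers to reference $[6]$ (Bingxue's paper on fuzzy homomorphisms of groups). Judged on its own merits, your argument is sound and self-contained. The saturation step in part (1) is legitimate: the supremum defining $f(x_1x_2, y_{x_1}y_{x_2})$ ranges over values in $[0,1]$, so it is bounded above by $1$, and the single admissible decomposition $y_1 = y_{x_1}$, $y_2 = y_{x_2}$ contributes the term $f(x_1,y_{x_1}) \wedge f(x_2,y_{x_2}) = 1$, forcing the supremum to equal $1$; the uniqueness clause in the definition of a fuzzy map then upgrades $f(x_1x_2, y_{x_1}y_{x_2}) = 1$ to the group-element equality $y_{x_1x_2} = y_{x_1}y_{x_2}$. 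The derivations of (2), (3), (4) by cancellation in $G'$ and uniqueness are all valid. It is worth noting that your technique --- exhibiting one well-chosen decomposition to pin the supremum at $1$, then invoking uniqueness of fuzzy images --- is exactly the device the authors themselves use repeatedly in the proofs they do include (Theorem 2.2, Lemma 3.1, and the lemmas of Section 4), so your proof integrates naturally with the paper's methods and in fact makes the paper more self-contained than it currently is.
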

Now, we prove the following fuzzy analogue of some results on kernel of a homomorphism.
\begin{theorem}
For a fuzzy homomorphism $f: G\cdots \to G'$,  kernel $K$ is a normal subgroup of $G$. Also $f$ is one-one if and only if $K = \{e\}$. 
\end{theorem}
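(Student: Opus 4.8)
The plan is to reduce everything to the associated \emph{crisp} assignment $x \mapsto y_x$, where $y_x \in G'$ denotes the unique element with $f(x, y_x) = 1$. The preceding theorem already records that this assignment behaves exactly like an ordinary group homomorphism: its parts give $y_{x_1 x_2} = y_{x_1} y_{x_2}$, $y_e = e'$, and $y_{x^{-1}} = y_x^{-1}$. The one translation I would set up first is the reformulation of the kernel: because $y_x$ is the \emph{unique} element sent to $1$ by $f$, the defining condition $f(x, e') = 1$ is equivalent to $y_x = e'$, so that $K = \{x \in G : y_x = e'\}$.

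Granting this dictionary, the subgroup axioms for $K$ follow by direct computation. For the identity, $e \in K$ since $f(e, e') = 1$ by part (2). For products, if $x_1, x_2 \in K$ then $y_{x_1 x_2} = y_{x_1} y_{x_2} = e' e' = e'$, so $x_1 x_2 \in K$. For inverses, if $x \in K$ then $y_{x^{-1}} = y_x^{-1} = (e')^{-1} = e'$, so $x^{-1} \in K$. For normality I would take $g \in G$ and $x \in K$ and compute $y_{g x g^{-1}} = y_g\, y_x\, y_g^{-1} = y_g\, e'\, y_g^{-1} = e'$, giving $g x g^{-1} \in K$.

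For the second statement I would argue both implications through the same translation. If $f$ is one-one and $x \in K$, then $f(x, e') = 1 = f(e, e')$ (the latter by part (2)), so one-one-ness forces $x = e$; hence $K = \{e\}$. Conversely, assume $K = \{e\}$ and suppose $f(x_1, y) = f(x_2, y) = 1$ for some $y \in G'$. By uniqueness of the fuzzy image this means $y_{x_1} = y = y_{x_2}$, whence $y_{x_1 x_2^{-1}} = y_{x_1} y_{x_2}^{-1} = y\, y^{-1} = e'$, so $x_1 x_2^{-1} \in K = \{e\}$ and therefore $x_1 = x_2$.

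The step I would be most careful about, and the only place where anything beyond classical group theory enters, is the systematic use of uniqueness of the fuzzy image to pass between the membership statement $f(x, y) = 1$ and the functional statement $y_x = y$. Once that translation is secured there is no genuine obstacle: the argument becomes precisely the classical proof that a homomorphism's kernel is a normal subgroup and that injectivity is equivalent to a trivial kernel, since all the supremum and homomorphism content has already been absorbed into the preceding theorem.
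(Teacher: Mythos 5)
Your proof is correct, but it takes a genuinely different route from the paper's. The paper never passes to the crisp assignment $x \mapsto y_x$: it works directly with the supremum formula in the definition of fuzzy homomorphism, showing $f(x_1x_2^{-1}, e') = 1$ and $f(g^{-1}xg, e') = 1$ by lower-bounding the relevant suprema with well-chosen factorizations (e.g.\ $e' = e'\cdot e'$ for closure, and $e' = y_g^{-1}\cdot y_g$ for normality), invoking only parts $(2)$ and $(4)$ of Theorem $2.1$. You instead invoke parts $(1)$--$(3)$ to observe that $x \mapsto y_x$ is an ordinary group homomorphism $G \to G'$ whose kernel, via uniqueness of fuzzy images, is exactly $K$, after which everything reduces to the classical argument. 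Your reduction is cleaner and arguably more illuminating---it absorbs all the fuzzy content into Theorem $2.1$ once and for all---whereas the paper's direct manipulation of suprema exhibits the bounding technique that recurs throughout its later proofs (Lemmas $3.1$, $3.5$, $4.1$, and so on), so each approach has its own pedagogical value. A further point in your favor: you actually prove the equivalence ``$f$ is one-one if and only if $K = \{e\}$'' in both directions, which the paper dismisses as ``lucid'' without proof; your converse argument (from $y_{x_1} = y = y_{x_2}$ deduce $y_{x_1x_2^{-1}} = e'$, hence $x_1x_2^{-1} \in K = \{e\}$) is exactly the right one and fills a genuine omission in the paper's exposition.
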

\begin{proof}
By $(2)$ of Theorem $(2.1)$, we have $f(e, e') = 1$ which shows that $K$ is non empty. Let $x_1, x_2 \in K$. Then $$f(x_1x_2^{-1}, e') = \sup_{y_1, y_2 \in G'}\big\{f(x_1, y_1)\wedge f(x_2^{-1}, y_2)\ \big|\  e'=y_1y_2 \big\}$$ $$\geq f(x_1, e') \wedge f(x_2^{-1}, e') $$ $$ = f(x_1, e') \wedge f(x_2^{-1}, (e')^{-1}) = 1 \wedge 1 = 1 \ $$where above holds because of $(4)$ of Theorem $(2.1)$. Above shows that $x_1x_2^{-1}\in K$, i.e., $K$ is a subgroup of $G$. Now, let $g \in G$ and $x \in K$. Then $f(x, e') = 1$ and there exists a unique $y_g \in G'$ such that $f(g, y_g) = 1$. Now $$f(g^{-1}xg, e') = \sup_{y_1, y_2 \in G'}\big\{f(g^{-1}x, y_1)\wedge f(g, y_2)\ \big|\ e'=y_1y_2 \big\}\geq f(g^{-1}x, y_g^{-1}) \wedge f(g, y_g) $$ $$= f(g^{-1}x, y_g^{-1}) \wedge 1 =  f(g^{-1}x, y_g^{-1})  = \sup_{y_1, y_2 \in G'}\big\{f(g^{-1}, y_1)\wedge f(x, y_2)\ \big|\  y_g^{-1}=y_1y_2 \big\}$$ $$\geq f(g^{-1}, y_g^{-1})\wedge f(x, e') = 1 \wedge 1 = 1.$$ Above shows that $K$ is a normal subgroup of $G$. Remaining part of the theorem is lucid.
\end{proof}

\section{fuzzy automorphism and fuzzy inner automorphism of a group }

A fuzzy homomorphism $f: G\cdots \to G$ is said to be fuzzy automorphism provided it is one-one and onto. Now, in this section we discuss our main results on fuzzy automorphism and inner automorphism of a group $G$.

\begin{lemma}
Composition of fuzzy automorphisms of a group is again a fuzzy automorphism.

\end{lemma}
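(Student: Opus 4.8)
The plan is to take two fuzzy automorphisms $f, g : G \cdots \to G$ and verify that $h := f \circ g$ meets all three requirements in the definition of a fuzzy automorphism: that it is a well-defined fuzzy map, that it is a fuzzy homomorphism, and that it is bijective. Throughout, the key simplification I would exploit is the Remark following the definition of composition: since $g$ is a fuzzy map, for each $z \in G$ there is a unique $a_z$ with $g(z, a_z) = 1$, and then $(f \circ g)(z, y) = f(a_z, y)$ for every $y$. This collapses the supremum over $a$ in the definition of composition to a single term, so that each verification reduces to the corresponding property of $f$ and $g$ individually.

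First I would check that $h$ is a fuzzy map. Fixing $z$, let $a_z$ be the fuzzy image of $z$ under $g$ and let $b$ be the fuzzy image of $a_z$ under $f$. Then $h(z, b) = f(a_z, b) = 1$ by the Remark, and since $h(z, y) = f(a_z, y)$ equals $1$ only for $y = b$ (as $f$ is a fuzzy map), this fuzzy image is unique. Hence $h$ is a fuzzy map.

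Next comes the homomorphism property. Writing $c_i$ for the fuzzy image of $x_i$ under $g$, I would invoke part $(1)$ of Theorem $2.1$ for the fuzzy homomorphism $g$ to conclude that the fuzzy image of $x_1 x_2$ under $g$ is $c_1 c_2$, so that $h(x_1 x_2, y) = f(c_1 c_2, y)$. Applying the homomorphism identity of $f$ to $f(c_1 c_2, y)$ and then rewriting each term via $h(x_i, y_i) = f(c_i, y_i)$ gives
$$h(x_1 x_2, y) = \sup_{y = y_1 y_2}\big\{ f(c_1, y_1) \wedge f(c_2, y_2) \big\} = \sup_{y = y_1 y_2}\big\{ h(x_1, y_1) \wedge h(x_2, y_2) \big\},$$
which is exactly the required identity. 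I expect this to be the step demanding the most care, since it is where the two maps genuinely interact: one must correctly identify the fuzzy image of a product under $g$ through Theorem $2.1(1)$ before the supremum attached to $f$ can be transferred to one attached to $h$.

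Finally, bijectivity is routine. For injectivity, if $h(x_1, y) = h(x_2, y) = 1$ then $f(c_1, y) = f(c_2, y) = 1$ forces $c_1 = c_2$ since $f$ is one-one, and then $g(x_1, c_1) = g(x_2, c_1) = 1$ forces $x_1 = x_2$ since $g$ is one-one. For surjectivity, given $y$, onto-ness of $f$ yields $a$ with $f(a, y) = 1$ and onto-ness of $g$ yields $z$ with $g(z, a) = 1$, whence $h(z, y) = f(a, y) = 1$. Combining the four parts shows that $h = f \circ g$ is a fuzzy automorphism, completing the argument.
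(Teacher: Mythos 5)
Your proof is correct and follows essentially the same route as the paper's: collapse $(f\circ g)(x,y)$ to $f(c_x,y)$ via the Remark, identify the $g$-image of $x_1x_2$ as $c_1c_2$ (you cite Theorem $2.1(1)$ where the paper re-derives this claim directly from the homomorphism identity of $g$), then prove injectivity and surjectivity exactly as the paper does. Your explicit verification that $f\circ g$ is a fuzzy map is a small addition that the paper leaves implicit, but otherwise the two arguments coincide.
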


\begin{proof}
Let $f$ and $g$ be two fuzzy automorphisms of $G$. Then  for each $y \in G$ and $x_1, x_2 \in G$, $$f(x_1x_2, y) = \sup_{y_1, y_2 \in G}\big\{f(x_1, y_1)\wedge f(x_2, y_2)\ \big|\ y= y_1y_2\big\}$$ and $$g(x_1x_2, y) = \sup_{y_1, y_2  \in G}\big\{g(x_1, y_1)\wedge g(x_2, y_2)\ \big|\ y= y_1y_2\big\}.$$ First we show that $f\circ g$ is a homomorphism. Since $f$ and $g$ are fuzzy maps, there exist unique $y_i,\ z_i$, for $1 \leq i \leq 2$, such that $$f(x_i, y_i)  = 1= g(x_i, z_i) \ \text{for}\ 1\leq i \leq 2.$$Now we claim that $g(x_1x_2, z_1z_2) = 1$. Observe that $$g(x_1x_2, z_1z_2) = \sup_{a, b \in G}\big\{g(x_1, a)\wedge g(x_2, b)\ \big| \  z_1z_2 = ab\big\}$$ $$\geq g(x_1, z_1)\wedge g(x_2, z_2) = 1.$$ Thus, claim holds. Now because of Definition $2.4$, we have \begin{equation}
(f\circ g)(x_1x_2, y) = f(z_1z_2, y)= \sup_{y_1, y_2 \in G}\big\{ f(z_1, y_1)\wedge f(z_2, y_2) \ \big|\ y = y_1y_2\big\}. \end{equation}Now as $$(f\circ g)(x_i, y_i) = \sup_{a \in G}\big\{f(a, y_i) \ \big|\ g(x_i, a) = 1\big\}  = f(z_i, y_i),\ 1\leq i \leq 2,$$we have \begin{equation*}
\sup_{y_1, y_2 \in G}\big\{ (f\circ g)(x_1, y_1)\wedge (f\circ g)(x_2, y_2)\ \big| \ y = y_1y_2\big\}
\end{equation*} $$ = \sup_{y_1, y_2 \in G}\big\{ f(z_1,  y_1)\wedge f(z_2, y_2)\ \big| \ y = y_1y_2\big\}$$ $$ = (f\circ g)(x_1x_2, y)$$ where above holds by incorporating equation $(3.1)$. Thus, $f\circ g$ is a fuzzy homomorphism. 
Now, we show that $f\circ g$ is a bijection. For any $y \in G$,  take $x_1, x_2 \in G$ such that $$(f\circ g)(x_1, y) = (f\circ g)(x_2, y) = 1$$ By definition, above can be written as $$\sup_{a \in G}\big\{ f(a, y)\ \big|\ g(x_1, a) =1\big\}= \sup_{b \in G}\big\{ f(b, y)\ \big|\  g(x_2, b)=1 \big\}=1$$ $$\implies \ f(a, y) = f(b, y) = 1,$$ where $a$ and $b$ are the unique elements corresponding to $x_1$ and $x_2$ in $G$ such that $g(x_1, a) = g(x_2, b) = 1$. Above implies that $a = b$ as $f$ is one-one. This further implies that $g(x_1, a) = g(x_2, a)=1$ and as $g$ is one-one, this means $x_1 = x_2$ which completes the proof for $f\circ g$ to be one-one. Now we  show that $f\circ g$ is onto. Take any $y \in G$. We have to prove the existence of some $x \in G$ such that $(f\circ g)(x, y) = 1$. Since $f$ is onto, this means there exists some $a \in G$ such that $f(a, y) = 1$. Now as $g$ is onto, so there exist some $x_a \in G$ such that $g(x_a, a) = 1$. Then observe that $$(f\circ g)(x_a, y) = \sup_{b \in G}\big\{ f(b, y)\ \big|\  g(x_a, b)=1 \big\}.$$ As $g$ is a fuzzy map and $g(x_a, a) = 1 = g(x_a, b)$ implies that $a = b$. So above can be written as $$(f\circ g)(x_a, y) =f(a, y) = 1.$$
Hence, result.
\end{proof} 

\begin{lemma}
Associativity: Let $f, g$ and $h$ are fuzzy automorphisms of a group $G$. Then \begin{equation}
(f\circ g)\circ h\equiv f \circ (g\circ h).\end{equation}

\end{lemma}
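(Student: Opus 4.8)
The plan is to prove the asserted identity by reducing the equality of fuzzy maps to the equality of their fuzzy images, and then exploiting the Remark after Definition $2.4$, which says that composing with a fuzzy map on the inside collapses the defining supremum to a single value. By Lemma $3.1$ the composites $f\circ g$, $g\circ h$ and the two triple composites $(f\circ g)\circ h$ and $f\circ(g\circ h)$ are all fuzzy automorphisms, hence in particular fuzzy maps; each therefore assigns a unique fuzzy image to every $x\in G$. Since two fuzzy maps on $G$ were declared equal exactly when they carry each $x$ to the same fuzzy image, it suffices to verify that the two triple composites produce the same fuzzy image of every $x\in G$.

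To do this I would fix $x\in G$ and build the relevant chain of unique fuzzy images: let $a\in G$ be the fuzzy image of $x$ under $h$ (so $h(x,a)=1$), let $b\in G$ satisfy $g(a,b)=1$, and let $c\in G$ satisfy $f(b,c)=1$. For the left-hand side, the Remark applied to the fuzzy map $h$ gives $\big((f\circ g)\circ h\big)(x,w)=(f\circ g)(a,w)$, and applying it again to the fuzzy map $g$ gives $(f\circ g)(a,w)=f(b,w)$; this equals $1$ precisely when $w=c$, so $c$ is the fuzzy image of $x$ under $(f\circ g)\circ h$. For the right-hand side, I would first compute the fuzzy image of $x$ under $g\circ h$: the Remark gives $(g\circ h)(x,v)=g(a,v)$, which equals $1$ exactly when $v=b$, so $b$ is that fuzzy image. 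Applying the Remark once more with the fuzzy map $g\circ h$ then yields $\big(f\circ(g\circ h)\big)(x,w)=f(b,w)$, again equal to $1$ only for $w=c$. Thus both triple composites send $x$ to the same element $c$.

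The substance of the argument is therefore just careful bookkeeping of the unique intermediate elements $a,b,c$ along the chain $x\mapsto a\mapsto b\mapsto c$: the two ways of bracketing the composition merely group this chain differently, and the uniqueness guaranteed by the fuzzy-map property forces the same terminal element $c$ in both cases. The only point that needs care — and the place where Lemma $3.1$ is essential — is that at each step the inner factor ($h$, then $g$, then $g\circ h$) is genuinely a fuzzy map, so that the Remark applies and the supremum in Definition $2.4$ never has to be evaluated nontrivially. Once this is in place, the conclusion $(f\circ g)\circ h\equiv f\circ(g\circ h)$ follows at once from the definition of equality of fuzzy maps, since the fuzzy images agree at the arbitrary point $x$.
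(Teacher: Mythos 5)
Your proposal is correct and follows essentially the same route as the paper's own proof: fix $x$, chase the chain of unique fuzzy images ($x \mapsto a \mapsto b$, the paper's $u$ and $z$), use the Remark after Definition $2.4$ to collapse each composition, and observe that both bracketings reduce to $f(b,\cdot)$, hence have the same fuzzy image. Your explicit appeal to Lemma $3.1$ to certify that $g\circ h$ is a fuzzy map is a slightly more careful justification of a step the paper handles implicitly, but the argument is the same.
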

\begin{proof}
Let $x, y \in G$. Then there exists $u \in G$ such that $h(x, u) = 1$. Again, $u \in G$ implies that there exists unique $z \in G$ such that $g(u, z) = 1$. Now, $$((f\circ g)\circ h)(x, y) = \sup_{a \in G}\big\{ (f\circ g)(a, y) \ \big|\ h(x, a) = 1\big\} $$ $$ = (f\circ g)(u, y) = f(z, y).$$  Now consider the right side of equation $(3.2)$. We have\begin{equation}
 (f \circ (g\circ h))(x, y) = \sup_{a \in G}\big\{ f(a, y) \ \big|\ (g\circ h)(x, a) = 1\big\}. 
\end{equation} Futher,  observe that $$(g\circ h)(x, z)=\sup_{b \in G}\big\{ g(b, z) \ \big|\  h(x, b) = 1\big\} $$ $$= g(u, z)=1. $$  So equation $(3.3)$ yields $$ (f \circ (g\circ h))(x, y) = f(z, y).$$ In particular, fuzzy images of $(f\circ g)\circ h$ and $ f \circ (g\circ h)$ are same. Hence $(f\circ g)\circ h\equiv f \circ (g\circ h)$.
\end{proof}

Now, let us consider a fuzzy homomorphism $I: G \cdots \to G$ such that $I(x, y) = 1$ if and only if $x = y$. It is easy to verify that $I$ is one-one and onto. Existence of such a  map is shown in the next section. In the next lemma we show that $I$ is infact an identity map.
\begin{lemma}
For any fuzzy automomorphism $f$, we have $$f\circ I \equiv f \equiv I\circ f.$$

\end{lemma}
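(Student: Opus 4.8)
The plan is to reduce everything to the definition of equality of fuzzy maps via fuzzy images, combined with Remark $2.1$, which records how a composition collapses when the inner factor is a genuine fuzzy map. Recall that $f \equiv g$ means precisely that, for each $x \in G$, the two maps assign $x$ the same fuzzy image. Hence it suffices, for each $x \in G$, to compute the unique element sent to the value $1$ by $f\circ I$ (respectively by $I\circ f$) and to check that it coincides with the fuzzy image $y_x$ of $x$ under $f$, i.e. the unique $y_x \in G$ with $f(x, y_x) = 1$.

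First I would dispose of $f\circ I \equiv f$. Here $I$ occupies the inner slot of the composition, and since $I$ is a fuzzy map with $I(x, a) = 1$ exactly when $a = x$, Remark $2.1$ gives $(f\circ I)(x, y) = f(x, y)$ for all $x, y \in G$. The two maps therefore agree pointwise, so in particular they have identical fuzzy images: the fuzzy image of $x$ under $f\circ I$ is the unique $y$ with $f(x, y) = 1$, namely $y_x$. This settles the left-hand identity.

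Next I would treat $I\circ f \equiv f$. Now $f$ is the inner factor, so for each $x \in G$ there is a unique $a = y_x$ with $f(x, a) = 1$, and Remark $2.1$ yields $(I\circ f)(x, y) = I(y_x, y)$. By the defining property of $I$, this equals $1$ precisely when $y = y_x$, so the fuzzy image of $x$ under $I\circ f$ is again $y_x$. Thus $I\circ f$ and $f$ share the same fuzzy images, giving $I\circ f \equiv f$.

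Once Remark $2.1$ is invoked the computations are essentially immediate, so I do not anticipate a serious obstacle. The only points demanding care are keeping track of which map sits in the inner slot of each composition, so that Remark $2.1$ is applied to the correct fuzzy map, and remembering that $\equiv$ requires only agreement of fuzzy images rather than full pointwise equality; the latter happens to hold for $f\circ I$ but need not hold for $I\circ f$, which is exactly why the argument for the right-hand identity passes through the image $y_x$ rather than claiming equality of values everywhere.
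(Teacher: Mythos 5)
Your proposal is correct and follows essentially the same route as the paper's own proof: both compute the compositions via the collapse rule for a fuzzy map in the inner slot (Remark 2.1) and compare fuzzy images to establish $\equiv$. Your added observation that $f\circ I$ agrees with $f$ pointwise, and your explicit ``if and only if'' check that the fuzzy image under $I\circ f$ is unique, are minor refinements of the same argument rather than a different approach.
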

\begin{proof}
Let $x \in G$ and $y$ be the fuzzy image of $x$ under $f$. Then $$(f\circ I)(x, y) = \sup_{a \in G}\big\{f(a, y)\ | \ I(x, a) = 1\big\} = f(x, y)=1.$$ Above implies that $y$ is fuzzy image of $x$ under $f\circ I$.
Also $$(I\circ f)(x, y) = \sup_{a \in G}\big\{I(a, y)\ | \ f(x, a) = 1\big\}.$$But as $y$ is the fuzzy image of $x$ under $f$, above can  be written as $$(I\circ f)(x, y) = I(y, y) = 1$$ which means $y$  is fuzzy image of $x$ under $I\circ f$. Therefore, result. 
\end{proof}

Now we define the notion of inverse fuzzy map. Let  $f: G\cdots \to G$ be a one-one and onto fuzzy homomorphism. Define $g : G\times  G \to [0, 1]$ by $g(y, x) = f(x, y)$. 
\begin{lemma}
$g$ is a well defined bijective fuzzy map and $$g\circ f \equiv I \equiv f\circ g.$$

\end{lemma}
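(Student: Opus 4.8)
The plan is to proceed in three stages: first verify that $g$ is a well-defined fuzzy map, then that it is bijective, and finally establish the two composition identities. The engine driving everything is the elementary duality built into the definition $g(y,x) = f(x,y)$: for any $x, y \in G$ we have $f(x,y) = 1$ if and only if $g(y,x) = 1$; equivalently, $y$ is the fuzzy image of $x$ under $f$ exactly when $x$ is the fuzzy image of $y$ under $g$. I would record this observation at the very start, since it is precisely what makes $g$ behave like an inverse, and every later step is an instance of it.

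For well-definedness I must show that for each $y \in G$ there is a unique $x \in G$ with $g(y,x) = 1$, i.e. with $f(x,y) = 1$. Existence is immediate from $f$ being onto, and uniqueness from $f$ being one-one; together these give that $g$ is a fuzzy map. Bijectivity of $g$ then falls out of $f$ being a fuzzy map: if $g(y_1, x) = g(y_2, x) = 1$ then $f(x, y_1) = f(x, y_2) = 1$, and uniqueness of the fuzzy image of $x$ under $f$ forces $y_1 = y_2$, so $g$ is one-one; while for each $x$ the fuzzy image $y$ of $x$ under $f$ satisfies $g(y, x) = 1$, so $g$ is onto.

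For the composition identities I would compute the fuzzy images directly, exploiting Remark $2.1$ to collapse each supremum to a single value, which is legitimate because both $f$ and $g$ are now known to be fuzzy maps. Fix $x \in G$ and let $y_x$ be its fuzzy image under $f$. By Remark $2.1$, $(g \circ f)(x, y) = g(y_x, y)$; by the duality the fuzzy image of $y_x$ under $g$ is precisely $x$, so $(g \circ f)(x, y) = 1$ iff $y = x$, which says the fuzzy image of $x$ under $g \circ f$ is $x$, matching $I$. The identity $f \circ g \equiv I$ is symmetric: letting $x'$ be the fuzzy image of $x$ under $g$ so that $f(x', x) = 1$, Remark $2.1$ gives $(f \circ g)(x, y) = f(x', y)$, whose unique solution $y = x$ again reproduces the identity map. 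Note that the order of stages matters here, since invoking Remark $2.1$ for $f \circ g$ presupposes that $g$ has already been shown to be a fuzzy map.

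I expect the only real care to be bookkeeping rather than genuine difficulty: keeping the transposed coordinates in $g(y,x) = f(x,y)$ straight, placing the correct map in the inner slot when applying Remark $2.1$, and distinguishing the two senses in which $f$ is ``injective'' — the one-one property of Definition $2.3$ (used for well-definedness of $g$) versus the uniqueness of the fuzzy image enjoyed by any fuzzy map (used for one-oneness of $g$ and for both composition identities). Once the duality is in hand, no deep step remains.
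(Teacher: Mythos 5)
Your proof is correct and takes essentially the same route as the paper's: exploit the duality $g(y,x)=f(x,y)$, get well-definedness of $g$ from $f$ being one-one and onto, and collapse the composition suprema to a single value to identify the fuzzy images with those of $I$. In fact you fill in details the paper leaves implicit --- the existence half of well-definedness (via surjectivity of $f$), the bijectivity check (which the paper dismisses as ``easy to check''), and the second composition identity (which the paper handles with ``similarly'').
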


\begin{proof}
Suppose there exist $x_1, x_2 \in G$ corresponding to some  $y \in G$ such that $g(y, x_1) = g(y, x_2) = 1$. By definition, this means $f(x_1, y) = 1 = f(x_2, y)$. But as $f$ is one-one, we must have $x_1 = x_2$. This shows that $g$ is a fuzzy map. It is easy to check that $g$ is a bijective fuzzy map. Now take any $x \in G$ and $y_1, y_2$ be its fuzzy images under $f\circ g$ and $I$ respectively. Then, $(f\circ g)(x, y_1)= 1 = I(x, y_2)$. We need to show $y_1 = y_2$. $I(x, y_2)  =1 $ implies that $x = y_2$. Now, by definition $$(f\circ g)(x, y_1) = \sup_{a \in G}\big\{f(a, y_1)\ | \ g(x, a) = 1\big\} = 1.$$As $g$ is a fuzzy map, there exist a unique  $a\in G$ such that $$(f\circ g)(x, y_1) = f(a, y_1) = 1 \ \text{and}\ g(x, a) = f(a, x) = 1.$$But as $f$ is one-one, above implies $x = y_1$ and hence $y_1 = y_2$. Therefore, $f\circ g\equiv I$. Similarly, we can show that $g\circ f \equiv I$.
\end{proof}
 
\begin{lemma} Map $g\circ f$ is a fuzzy homomorphism.

\end{lemma}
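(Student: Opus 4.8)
The plan is to verify the defining fuzzy-homomorphism identity for $g\circ f$ by hand, rather than to quote the equivalence $g\circ f\equiv I$ from Lemma $3.4$: that equivalence only asserts that the two maps share the same fuzzy images, whereas being a fuzzy homomorphism constrains all the values $f(x,y)$, not merely those equal to $1$. First I would record the explicit values of $g\circ f$. For $x,y\in G$, writing $y_x$ for the fuzzy image of $x$ under $f$, Remark $2.1$ together with the definition $g(u,v)=f(v,u)$ gives $(g\circ f)(x,y)=g(y_x,y)=f(y,y_x)$. Substituting this into the homomorphism condition for $g\circ f$ and using $y_{x_1x_2}=y_{x_1}y_{x_2}$ from $(1)$ of Theorem $2.1$, the entire statement collapses to the single identity
\[ f(y,\,y_{x_1}y_{x_2})=\sup_{y_1,y_2\in G}\big\{f(y_1,y_{x_1})\wedge f(y_2,y_{x_2})\ \big|\ y=y_1y_2\big\}, \]
which is a ``dual'' form of the definition of $f$: the supremum now ranges over factorizations of the \emph{source} $y$, while the target is frozen as the product $y_{x_1}y_{x_2}$.

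To prove this dual identity I would split it into the two inequalities. The inequality ``$\ge$'' is immediate from the definition of the fuzzy homomorphism $f$: for any factorization $y=y_1y_2$, expanding $f(y_1y_2,\,y_{x_1}y_{x_2})$ by the defining supremum and keeping only the term indexed by the target factorization $y_{x_1}\cdot y_{x_2}$ gives $f(y_1y_2,\,y_{x_1}y_{x_2})\ge f(y_1,y_{x_1})\wedge f(y_2,y_{x_2})$; since the left side equals $f(y,y_{x_1}y_{x_2})$, taking the supremum over all factorizations yields the bound.

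The reverse inequality ``$\le$'' is the main obstacle, and I expect to clear it through a translation-invariance property of $f$, namely $f(sz,\,y_sw)=f(z,w)$ for all $s,z,w\in G$. The bound $f(sz,y_sw)\ge f(z,w)$ follows from the defining supremum for $f$ at the product $sz$ by retaining the single term coming from the target factorization $y_s\cdot w$ and using $f(s,y_s)=1$; applying the same bound with $s$ replaced by $s^{-1}$ and invoking $y_{s^{-1}}=y_s^{-1}$ from $(3)$ of Theorem $2.1$ supplies the opposite inequality, so equality holds. Granting this, I would realize the supremum in the dual identity by the explicit factorization $y=x_1\cdot(x_1^{-1}y)$: here $f(x_1,y_{x_1})=1$, and translation invariance with $s=x_1$, $z=x_1^{-1}y$, $w=y_{x_2}$ gives $f(x_1^{-1}y,\,y_{x_2})=f(y,\,y_{x_1}y_{x_2})$, so the corresponding term already equals the left-hand side. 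This forces ``$\le$'', and hence the dual identity.

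Once the dual identity is known for all $x_1,x_2,y$, the homomorphism condition for $g\circ f$ holds, and since $g\circ f$ is a fuzzy map (Remark $2.1$ shows the composite is single-valued at $1$), it is a fuzzy homomorphism. I would also point out that the same translation-invariance step shows directly that the inverse map $g$ is itself a fuzzy homomorphism; this is the genuine content concealed behind the modest-looking assertion about $g\circ f$, and the one place where the argument is more than bookkeeping.
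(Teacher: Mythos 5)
Your proof is correct and is essentially the paper's own argument: both reduce $(g\circ f)(x,z)$ to $f(z,y_x)$, prove ``$\geq$'' by selecting the target factorization $y_{x_1}\cdot y_{x_2}$ in the defining supremum of $f$, and prove ``$\leq$'' by exhibiting the source factorization $z=x_1\cdot(x_1^{-1}z)$ and using $f(x_1^{-1},y_{x_1}^{-1})=1$ from Theorem $2.1$. Your only departure is organizational: you package the paper's key estimate $f(x_1^{-1}z,\,y_{x_2})\geq f(z,\,y_{x_1}y_{x_2})$ as one half of a translation-invariance identity $f(sz,\,y_sw)=f(z,w)$ (proving both halves, though only one is needed), which unwinds to exactly the same computation.
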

\begin{proof}
We need to show that $$(g\circ f)(x_1x_2, z) = \sup_{z_1, z_2 \in G}\big\{ (g\circ f)(x_1, z_1) \wedge (g\circ f)(x_2, z_2)\ \big|\  z = z_1z_2 \big\}$$ for $x_1, x_2$ and $z \in G$. So, let $x_1, x_2 \in G$. Then there exist unique $y_{x_1}, y_{x_2} \in G$ such that $$f(x_1, y_{x_1}) = 1 = f(x_2, y_{x_2}).$$Since $f$ is a fuzzy homomorphism, we have $f(x_1x_2, y_{x_1}y_{x_2}) = 1$. Now, let $z_1, z_2 \in G$ be such that $z = z_1z_2$. Then $$(g\circ f)(x_1x_2, z) = (g\circ f)(x_1x_2, z_1z_2) = g(y_{x_1}y_{x_2}, z_1z_2)$$ $$\hspace{20mm} = f(z_1z_2, y_{x_1}y_{x_2}) = \sup_{a, b \in G}\big\{ f( z_1, a) \wedge f( z_2, b)\ \big|\  y_{x_1}y_{x_2}=ab\big\}$$ $$ \hspace{15mm}\geq f( z_1, y_{x_1}) \wedge f( z_2, y_{x_2}) = g(  y_{x_1}, z_1) \wedge g(  y_{x_2}, z_2) $$ $$ = (g\circ f)(x_1, z_1) \wedge (g\circ f)(x_2, z_2).$$ Since $z_1, z_2$ are arbitrary, we have $$(g\circ f)(x_1x_2, z) \geq \sup_{z_1, z_2 \in G}\big\{ (g\circ f)(x_1, z_1) \wedge (g\circ f)(x_2, z_2)\ \big|\  z = z_1z_2 \big\}.$$For the other way around, since $f$ is a fuzzy homormorphism, by $(4)$ of Theorem $2.1$  $$f(x, y_x) = 1 \implies f(x^{-1}, y_{x^{-1}}) = 1.$$Consider $$\sup_{z_1, z_2 \in G}\big\{ (g\circ f)(x_1, z_1) \wedge (g\circ f)(x_2, z_2)\ \big|\  z = z_1z_2 \big\} \geq  (g\circ f)(x_1, x_1) \wedge (g\circ f)(x_2, x_1^{-1} z)$$ $$ = 1 \wedge (g\circ f)(x_2, x_1^{-1} z) = g(y_{x_2},x_1^{-1} z) = f(x_1^{-1} z, y_{x_2}) $$ $$ = \sup_{a, b \in G}\big\{ f( x_1^{-1}, a) \wedge f( z, b)\ \big|\ y_{x_2}=ab\big\}$$ $$\geq f( x_1^{-1}, y_{x_1}^{-1}) \wedge f( z, y_{x_1}y_{x_2})  =f( z, y_{x_1}y_{x_2}) = g(y_{x_1}y_{x_2}, z)$$ $$ = (g\circ f)(x_1x_2, z).$$Thus, result.
\end{proof}
\begin{lemma}
$g$ is a fuzzy automorphism.  
\end{lemma}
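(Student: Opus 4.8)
The plan is to show that $g$ is a fuzzy homomorphism; since Lemma $3.4$ has already established that $g$ is a bijective fuzzy map, this will immediately yield that $g$ is a fuzzy automorphism. So the only real task is to verify the homomorphism identity for $g$: for all $y_1, y_2, x \in G$,
$$g(y_1y_2, x) = \sup_{x_1, x_2 \in G}\big\{ g(y_1, x_1)\wedge g(y_2, x_2)\ \big|\ x = x_1x_2\big\}.$$
Rewriting this through the defining relation $g(y, x) = f(x, y)$, the identity I must prove becomes
$$f(x, y_1y_2) = \sup_{x_1, x_2 \in G}\big\{ f(x_1, y_1)\wedge f(x_2, y_2)\ \big|\ x = x_1x_2\big\}. \qquad (\ast)$$
I expect the argument for $(\ast)$ to run in close parallel to the proof of Lemma $3.5$, splitting into the two inequalities.

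For the inequality $\geq$ in $(\ast)$, I would take an arbitrary factorization $x = x_1x_2$ and apply the fuzzy homomorphism property of $f$ to $f(x_1x_2, y_1y_2)$. Selecting the particular decomposition $y_1y_2 = y_1\cdot y_2$ inside the supremum gives $f(x, y_1y_2)\geq f(x_1, y_1)\wedge f(x_2, y_2)$, and taking the supremum over all factorizations of $x$ shows that the left side of $(\ast)$ dominates the right.

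The harder inequality $\leq$ is the main obstacle, and here I would exploit the bijectivity of $f$ to make a clever choice of factorization. Since $f$ is onto and one-one, there is a unique $x_{y_1}\in G$ with $f(x_{y_1}, y_1) = 1$; I set $x_1 = x_{y_1}$ and $x_2 = x_{y_1}^{-1}x$, so that $x_1x_2 = x$ and $f(x_1, y_1)\wedge f(x_2, y_2) = f(x_{y_1}^{-1}x, y_2)$. Applying the fuzzy homomorphism property of $f$ to $f(x_{y_1}^{-1}x, y_2)$ with the decomposition of the codomain element $y_2 = y_1^{-1}\cdot(y_1y_2)$, and invoking part $(4)$ of Theorem $2.1$ which gives $f(x_{y_1}^{-1}, y_1^{-1}) = 1$, I obtain $f(x_{y_1}^{-1}x, y_2)\geq f(x_{y_1}^{-1}, y_1^{-1})\wedge f(x, y_1y_2) = f(x, y_1y_2)$. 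This single factorization already forces the supremum on the right of $(\ast)$ to be at least $f(x, y_1y_2)$, establishing the reverse inequality. The two inequalities together prove $(\ast)$, hence that $g$ is a fuzzy homomorphism, and combined with the bijectivity from Lemma $3.4$ this shows $g$ is a fuzzy automorphism.
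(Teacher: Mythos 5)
Your proof is correct, but it takes a genuinely different route from the paper's. The paper does not verify the homomorphism identity for $g$ directly: it leans on Lemma $3.5$ (that $g\circ f$ is a fuzzy homomorphism). Given $y_1, y_2$, the paper picks $x_1, x_2$ with $f(x_1,y_1) = f(x_2,y_2) = 1$ (surjectivity of $f$), notes that $(g\circ f)(x_i, z_i) = g(y_i, z_i)$ and that $(g\circ f)(x_1x_2, z) = g(y_1y_2, z)$ because $f(x_1x_2, y_1y_2) = 1$, and then transports the homomorphism identity of $g\circ f$ across these equalities to obtain the identity for $g$. Your argument, by contrast, is self-contained: you rewrite the required identity as $(\ast)$ for $f$ via $g(y,x) = f(x,y)$ and prove both inequalities directly, securing the harder direction by the single factorization $x = x_{y_1}\cdot(x_{y_1}^{-1}x)$ together with part $(4)$ of Theorem $2.1$ — this mirrors the technique used \emph{inside} the paper's proof of Lemma $3.5$ rather than quoting its statement. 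What each buys: the paper's route is shorter on the page because Lemma $3.5$ has already done the heavy lifting, whereas your route shows that Lemma $3.5$ is not actually needed as an intermediary here — the symmetry $g(y,x) = f(x,y)$, the bijectivity of $f$, and Theorem $2.1(4)$ suffice, which makes the logical structure of Section $3$ slightly leaner. One small point worth making explicit in your write-up: the existence of $x_{y_1}$ uses only that $f$ is onto; one-oneness gives its uniqueness but is not needed for the inequality.
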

\begin{proof}
We just need to show that $g$ is a fuzzy homomorphism, i.e. $$g(y_1y_2, z) = \sup_{z_1, z_2 \in G}\big\{g(y_1, z_1)\wedge g(y_2, z_2)\ | \ z = z_1z_2\big\} $$ for any $y_1, y_2$ and  $z \in G$. Take $y_1, y_2, \in G$, this means there exist $x_1, x_2 \in G$ such that $f(x_1, y_1) = 1 = f(x_2, y_2)$ as $f$ is onto. Since $f$ is a fuzzy homomorphism, so $f( x_1x_2, y_1y_2) = 1$. For $z = z_1z_2$, we have $$(g\circ f)(x_1, z_1) = g(y_1, z_1)\ \text{and} \ (g\circ f)(x_2, z_2) = g(y_2, z_2).$$Now, consider 
$$\sup_{z_1, z_2 \in G}\big\{g(y_1, z_1)\wedge g(y_2, z_2)\ | \ z = z_1z_2\big\} =  \sup_{z_1, z_2 \in G}\big\{(g\circ f)(x_1, z_1)\wedge (g\circ f)(x_2, z_2)\ | \ z = z_1z_2\big\}$$ $$ = (g\circ f)(x_1x_2, z) = g(y_1y_2, z) $$ where above holds because of Lemma $3.5$. Above means that $g$ is a fuzzy homomrophism. Lemmas $3.4$ and $3.6$ yields that $g = f^{-1}$, i.e. $g$ is inverse  of fuzzy automorphism $f$.

\end{proof}
 
Let $S$ be a non-empty set. By $A_{F}(S)$ we denote the set of all fuzzy bijective maps on the set $S$. From the above discussion one can easily deduce that $A_{F}(S)$ is a group w.r.t. composition of fuzzy mappings. This group is termed as fuzzy permutation group on the set $S$. For a group $G$, Let $A_{F}(G)$ be the fuzzy permutation group on the group $G$  and by $Aut_{F}(G)$ we denote the set of all fuzzy automorphisms on a group $G$. Then $A_{F}(G)$ and $Aut_{F}(G)$ form groups under composition of fuzzy mappings.

\section{fuzzy permutation induced by a fuzzy subgroup and Cayley Theorem }
\begin{lemma}
	Let $\mu \in F(G)$ such that $\mu(x)=1$ if and only if $x=e$. Then for $a\in G$, the map $f_{a}^{\mu}:G \times G \rightarrow [0,1]$ given by $ f_{a}^{\mu}(x,y)=\mu(x^{-1}a^{-1}y)$ is a fuzzy permutation on $G$. 
\end{lemma}
\begin{proof} Let $x_1, x_2 \in G$ such that for some $y \in G$, we have $f_{a}^{ \mu}(x_1, y) = f_{a}^{ \mu}(x_2, y) = 1$. This means $\mu(x_{1}^{-1}a^{-1}y) = \mu(x_{2}^{-1}a^{-1}y)=1$. But this holds only if $x_{1}^{-1}a^{-1}y = x_{2}^{-1}a^{-1}y =e$  and hence $x_1 = x_2$. Thus, $f_{a}^{ \mu}$ is one-one. For onto, let $y \in G$. Then $a^{-1}y$ is also in  $G$. Observe that $f_{a}^{ \mu}(a^{-1}y, y) = \mu(e) = 1$. Thus, $f_{a}^{ \mu}$ is onto. 
\end{proof}

{\begin{theorem} Let $\mu \in F(G)$ such that $\mu(x)=1$ if and only if $x=e$ and   $A_{F}^{\mu}(G)=\{f_{a}^{\mu} :a \in G\}$. Then $A_{F}^{\mu}(G)$ is a group under composition of fuzzy mappings.
\end{theorem}
\begin{proof} Let $A=A_{F}^{\mu}(G)$. Then for $f_{a}^{ \mu}, f_{b}^{ \mu}\in A $, we have
	\begin{eqnarray*}
		f_{a}^{ \mu}\circ f_{b}^{ \mu}(x,y)= f_{a}^{ \mu}(bx,y)= \mu(x^{-1}b^{-1} a^{-1}y)=f_{ab}^{\mu}(x,y)
	\end{eqnarray*}
As for any 	$f_{a}^{ \mu}$, 	$f_{a}^{ \mu}\circ I_{e}^{ \mu}= I_{e}^{ \mu}\circ f_{a}^{ \mu}= f_{a}^{ \mu}$, the fuzzy identity map $I=	I_{e}^{ \mu}$ is the identity element of  $A_{F}^{\mu}(G)$. It is easy to see that $f_{a}^{ \mu}\circ f_{a^{-1}}^{ \mu}= I_{e}^{ \mu}=f_{a^{-1}}^{ \mu}\circ f_{a}^{ \mu}$. 
\end{proof} 

\begin{theorem}  (Cayley Theorem) $G\cong A_{F}^{\mu}(G)$.
	
\end{theorem}
\begin{proof} Define a map $\sigma :G\longrightarrow A_{F}^{\mu}(G)\subseteq A_{F}(G)$ by 
	$\sigma (a)= f_{a}^{ \mu}$. Since $f_{a}^{ \mu}\circ f_{b}^{ \mu}=f_{ab}^{\mu}$, $\sigma$ is a homomorphism of groups. Let $a\in Ker\; \sigma$.Then $f_{a}^{ \mu}=	I_{e}^{ \mu}$ i.e., 
		$\mu(x^{-1}a^{-1}y)=\mu(x^{-1}y)$ $\forall (x,y)\in G\times G$. In particular, for $y=x$, $\mu(x^{-1}a^{-1}x)=\mu(e)=1$. This holds only when $x^{-1}a^{-1}x=e$. Eventually, $a=e$ and $\sigma$ is a monomorphism of groups. For any $f_{a}^{ \mu} \in A_{F}^{\mu}(G)$, $\sigma(a)=f_{a}^{ \mu}$. Thus $\sigma$ is an isomorphism and hence $G\cong A_{F}^{\mu}(G)$.  
 
\end{proof}
 \begin{definition}
 Fuzzy inner automorphism: A fuzzy map $f:G \cdots \to G$ is said to be a class preserving map if $f(x,y)=1$ if and only if $y=a^{-1}xa$ for some $a\in G$. Let Aut$_F(G)$ denotes the group of all fuzzy automorphisms of $G$, then we say that $f \in$ Aut$_F(G)$ is an inner automorphism if and only if there exists $g \in G$ (fixed) such that $f(x, y) = 1$ if and only if $y = g^{-1}xg$. In this case, we denote $f$ by $f_g$. Further, let Inn$_F(G)$ denotes the set of all fuzzy inner automorphisms of $G$. 
 \end{definition}
 \begin{lemma}
Composition of fuzzy inner automorphisms is again a fuzzy inner automorphism.
\end{lemma}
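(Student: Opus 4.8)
The plan is to take two fuzzy inner automorphisms $f_{g_1}, f_{g_2} \in \text{Inn}_F(G)$, induced by fixed elements $g_1, g_2 \in G$, and show that their composition is again inner by exhibiting an explicit conjugating element. First I would observe that by Lemma $3.1$ the composition $f_{g_1}\circ f_{g_2}$ is already known to be a fuzzy automorphism, so the only remaining task is to verify that it is class-preserving in the precise sense of Definition $3.1$: that there is a single fixed $g \in G$ with $(f_{g_1}\circ f_{g_2})(x,y)=1$ if and only if $y = g^{-1}xg$ for all $x, y \in G$.

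Next I would compute the fuzzy image directly. Since $f_{g_2}$ is a fuzzy map, for each $x \in G$ there is a unique $a \in G$ with $f_{g_2}(x,a)=1$, and by the defining property of $f_{g_2}$ this element is $a = g_2^{-1}xg_2$. Invoking Remark $2.1$, the supremum in the composition collapses, giving $(f_{g_1}\circ f_{g_2})(x,y)=f_{g_1}(g_2^{-1}xg_2,\,y)$. This is the step where the composition convention of Definition $2.4$ must be handled carefully, since the outer map $f_{g_1}$ is being evaluated at the image already produced by $f_{g_2}$.

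Applying the defining property of $f_{g_1}$ to the right-hand side then yields $(f_{g_1}\circ f_{g_2})(x,y)=1$ if and only if $y = g_1^{-1}(g_2^{-1}xg_2)g_1 = (g_2 g_1)^{-1}x(g_2 g_1)$. Setting $g = g_2 g_1 \in G$, this is exactly the statement that $f_{g_1}\circ f_{g_2} = f_{g}$ with a fixed conjugating element $g$, and hence $f_{g_1}\circ f_{g_2} \in \text{Inn}_F(G)$, as required.

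The calculation is essentially routine once the composition has been unwound via Remark $2.1$; the one point demanding genuine care is tracking the order of multiplication, so that the conjugating element emerges as $g_2 g_1$ rather than $g_1 g_2$. This reversal is a direct consequence of the convention in Definition $2.4$ that in $f\circ g$ the map $g$ acts first, and it faithfully mirrors the classical fact that the inner automorphism induced by $g_2 g_1$ is the composite of those induced by $g_1$ and $g_2$.
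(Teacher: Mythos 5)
Your proof is correct and follows essentially the same route as the paper: unwind the composition via the unique fuzzy image $g_2^{-1}xg_2$ of $x$ under $f_{g_2}$, obtain $(f_{g_1}\circ f_{g_2})(x,y)=f_{g_1}(g_2^{-1}xg_2,y)$, and identify the conjugating element as $g_2g_1$, so that $f_{g_1}\circ f_{g_2}\equiv f_{g_2g_1}$. Your explicit appeal to Lemma $3.1$ to confirm that the composition is a fuzzy automorphism (as required by Definition $3.1$) is a small point of extra care that the paper's proof leaves implicit.
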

\begin{proof}
Let $f_{g_1}$ and $f_{g_2}$ be two fuzzy inner automorphisms of $G$. Then $$(f_{g_1}\circ f_{g_2})(x, y)= \sup_{a \in G}\big\{f_{g_1}(a, y)\ \big| \ f_{g_2}(x, a)=1\big\}= f_{g_1}(g_2^{-1}xg_2, y) .$$Now  from above, $$(f_{g_1}\circ f_{g_2})(x, y)=1 \Longleftrightarrow y = g_1^{-1}g_2^{-1}xg_2g_1.$$Similarly, by definition $$f_{g_2g_1}(x, y) = 1 \Longleftrightarrow y = g_1^{-1}g_2^{-1}xg_2g_1.$$Thus fuzzy images of $f_{g_1}\circ f_{g_2}$ and $f_{g_2 g_1}$ are same and hence it follows that $f_{g_1}\circ f_{g_2}\equiv f_{g_2g_1}$.
\end{proof}

\begin{lemma}
If $f_g \in $Inn$_F(G)$, then $f_g^{-1} $ is also in  Inn$_F(G)$ for any $g \in G$ and $f_g^{-1} \equiv f_{g^{-1}}$.

\end{lemma}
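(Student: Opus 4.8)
The plan is to combine the explicit formula for the inverse fuzzy map introduced just before Lemma $3.4$ with the defining condition for a fuzzy inner automorphism in Definition $3.1$, and then to settle both assertions by comparing fuzzy images. The two statements are really one: once we verify that the fuzzy images of $f_g^{-1}$ are exactly those prescribed for $f_{g^{-1}}$, the equivalence $f_g^{-1}\equiv f_{g^{-1}}$ follows at once, and this simultaneously exhibits $f_g^{-1}$ as a class-preserving fuzzy automorphism, hence as a member of Inn$_F(G)$.

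First I would record that $f_g^{-1}$ exists and is a fuzzy automorphism: by the construction preceding Lemma $3.4$ its inverse is the fuzzy map determined by $f_g^{-1}(y,x)=f_g(x,y)$, and by Lemma $3.6$ this map is again a fuzzy automorphism. It therefore suffices to determine its fuzzy images. For arbitrary $x,y\in G$, the defining property of the inner automorphism $f_g$ gives
\[
f_g^{-1}(x,y)=1 \iff f_g(y,x)=1 \iff x=g^{-1}yg.
\]
Solving the last relation for $y$ yields $y=gxg^{-1}=(g^{-1})^{-1}x(g^{-1})$, so that
\[
f_g^{-1}(x,y)=1 \iff y=(g^{-1})^{-1}x(g^{-1}).
\]

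This is precisely the class-preserving condition defining $f_{g^{-1}}$, with the fixed element being $g^{-1}$; that is, $f_g^{-1}(x,y)=1$ if and only if $f_{g^{-1}}(x,y)=1$. Hence $f_g^{-1}$ and $f_{g^{-1}}$ assign the same fuzzy image to every $x\in G$, which is exactly the meaning of $f_g^{-1}\equiv f_{g^{-1}}$. Since $f_g^{-1}$ is a fuzzy automorphism whose fuzzy images are given by conjugation by the fixed element $g^{-1}$, Definition $3.1$ shows $f_g^{-1}\in$ Inn$_F(G)$, completing both parts. The only step demanding care is the bookkeeping of inverses: one must keep the argument order in $f_g^{-1}(y,x)=f_g(x,y)$ straight and rewrite $x=g^{-1}yg$ as $y=(g^{-1})^{-1}x(g^{-1})$ so that it matches the template $y=h^{-1}xh$ for $h=g^{-1}$ verbatim; everything else is a routine identification of fuzzy images.
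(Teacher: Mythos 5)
Your proposal is correct and follows essentially the same route as the paper's own proof: both unwind $f_g^{-1}(x,y)=1$ via the inverse fuzzy map definition to $f_g(y,x)=1$, solve the conjugation relation to get $y=(g^{-1})^{-1}xg^{-1}$, and match this against the defining condition of $f_{g^{-1}}$. Your version is in fact slightly more careful than the paper's, since you explicitly invoke Lemma $3.6$ to justify that $f_g^{-1}$ is a fuzzy automorphism before concluding membership in Inn$_F(G)$.
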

\begin{proof}
Take $x$ and $y$ in $G$, then by definition of  inverse fuzzy map, we have $$f_g^{-1}(x, y) = 1 \Longleftrightarrow f_g(y, x) = 1   \Longleftrightarrow y = (g^{-1})^{-1}xg^{-1}.   $$Thus, $f_g^{-1} $ is a fuzzy inner automorphism. Also note that $$f_{g^{-1}}(x, y) = 1 \Longleftrightarrow y = (g^{-1})^{-1}xg^{-1}.   $$This shows that $f_g^{-1} \equiv f_{g^{-1}}$.
\end{proof}
\begin{lemma}
Inn$_F(G)$ is a normal subgroup of  Aut$_F(G)$.

\end{lemma}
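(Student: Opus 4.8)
The plan is to prove the statement in two stages: first that Inn$_F(G)$ is a subgroup of Aut$_F(G)$, and then that it is closed under conjugation by arbitrary fuzzy automorphisms. For the subgroup part I would invoke the subgroup criterion. The set is nonempty because the identity $I$ coincides with the inner automorphism induced by $e$: indeed $f_e(x,y)=1$ iff $y=e^{-1}xe=x$, so $f_e\equiv I\in$ Inn$_F(G)$. Closure under composition is exactly Lemma $3.8$, which gives $f_{g_1}\circ f_{g_2}\equiv f_{g_2g_1}\in$ Inn$_F(G)$, and closure under inverses is Lemma $3.9$, which gives $f_g^{-1}\equiv f_{g^{-1}}\in$ Inn$_F(G)$. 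Together these show Inn$_F(G)$ is a subgroup of Aut$_F(G)$.

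For normality, I would fix an arbitrary $\phi\in$ Aut$_F(G)$ and an arbitrary $f_g\in$ Inn$_F(G)$ and show that the conjugate $\phi\circ f_g\circ\phi^{-1}$ is again inner. The natural guess, transported from the classical identity $\phi\,\mathrm{conj}_g\,\phi^{-1}=\mathrm{conj}_{\phi(g)}$, is that it equals $f_{y_g}$, where $y_g$ denotes the fuzzy image of $g$ under $\phi$ (so $\phi(g,y_g)=1$). Since two fuzzy maps are equal precisely when they share all fuzzy images, it suffices to compute the fuzzy image of an arbitrary $x\in G$ under $\phi\circ f_g\circ\phi^{-1}$ and check that it equals $y_g^{-1}xy_g$.

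The key computation tracks this image through the three composed maps, using the composition formula together with Remark $2.1$ (and Lemma $3.2$ to regroup parentheses freely). Let $u$ be the fuzzy image of $x$ under $\phi^{-1}$, so that $\phi^{-1}(x,u)=1$, equivalently $\phi(u,x)=1$; this says the fuzzy image of $u$ under $\phi$ is $x$. Applying $f_g$ sends $u$ to its fuzzy image $g^{-1}ug$, and applying $\phi$ then sends $g^{-1}ug$ to its fuzzy image under $\phi$. Here I would use Theorem $2.1$: parts $(1)$ and $(3)$ say that the fuzzy-image assignment of a fuzzy homomorphism is multiplicative and inverse-preserving, so the fuzzy image of $g^{-1}ug$ under $\phi$ equals $y_g^{-1}\,(\text{image of }u)\,y_g=y_g^{-1}xy_g$. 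Hence the fuzzy image of $x$ under $\phi\circ f_g\circ\phi^{-1}$ is $y_g^{-1}xy_g$, which is exactly the fuzzy image of $x$ under $f_{y_g}$. As $x$ was arbitrary, $\phi\circ f_g\circ\phi^{-1}\equiv f_{y_g}\in$ Inn$_F(G)$, proving normality.

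I expect the only delicate point to be the bookkeeping in the composition: correctly identifying which element is the unique fuzzy image at each stage and applying the homomorphism property of $\phi$ to the conjugate $g^{-1}ug$, rather than any genuinely new idea. The result is the fuzzy analogue of the classical fact that inner automorphisms form a normal subgroup of the full automorphism group, and the proof is essentially its transcription once the fuzzy-image map of $\phi$ is recognized as a group homomorphism via Theorem $2.1$.
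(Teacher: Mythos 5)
Your proof is correct and follows essentially the same route as the paper: subgroup status from the closure-under-composition and closure-under-inverse lemmas (numbered $3.7$ and $3.8$ in the paper, not $3.8$ and $3.9$), then normality by showing that the conjugate of $f_g$ is the inner automorphism induced by the fuzzy image of $g$ under the conjugating automorphism. The only cosmetic differences are that the paper conjugates as $f^{-1}\circ f_g\circ f$ and verifies $f^{-1}(g^{-1}zg,\,a^{-1}xa)=1$ by working through the sup-inequalities of the fuzzy homomorphism property by hand, whereas you conjugate as $\phi\circ f_g\circ\phi^{-1}$ and obtain the fuzzy image of $g^{-1}ug$ directly from parts $(1)$ and $(3)$ of Theorem $2.1$; both are valid, and your appeal to Theorem $2.1$ slightly streamlines the computation.
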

\begin{proof}
Lemmas $3.7$ and $3.8$ together show that Inn$_F(G)$ is a  subgroup of  Aut$_F(G)$. Now, let $f$ be any element in Aut$_F(G)$ and $f_g$ be in Inn$_F(G)$. Then we have $$(f^{-1}\circ f_g\circ f)(x, y) = \sup_{a \in G}\big\{(f^{-1}\circ f_g)(a, y)\ \big| \ f(x, a)=1\big\}.$$Since $f \in$  Aut$_F(G)$, so for any $x \in G$, there exists a unique $z \in G$ such that $f(x, z) = 1$. Then $$(f^{-1}\circ f_g\circ f)(x, y) =(f^{-1}\circ f_g)(z, y) = f^{-1}(g^{-1}zg, y).$$As $g \in G$, there exists a unique $a \in G$ such that $f^{-1}(g, a) = 1$ which further implies that $f^{-1}(g^{-1}, a^{-1}) = 1$. Now $$f^{-1}(g^{-1}zg, a^{-1}xa) = \sup_{y_1, y_2\in G}\big\{f^{-1}(g^{-1}z, y_1)\wedge f^{-1}(g, y_2)\ \big| \  a^{-1}xa=y_1y_2\big\}$$ $$\geq f^{-1}(g^{-1}z, a^{-1}x)\wedge f^{-1}(g, a) = f^{-1}(g^{-1}z, a^{-1}x)$$ $$ = \sup_{y_1, y_2\in G}\big\{f^{-1}(g^{-1}, y_1)\wedge f^{-1}(z, y_2)\ \big| \  a^{-1}x=y_1y_2 \big\}$$ $$\geq f^{-1}(g^{-1}, a^{-1})\wedge f^{-1}(z, x) = f^{-1}(g^{-1}, a^{-1}) = 1.$$This implies that $(f^{-1}\circ f_g\circ f)(x, a^{-1}xa) = 1$ and hence $f^{-1}\circ f_g\circ f \in$  Inn$_F(G)$. Thus, Inn$_F(G)$ is a normal subgroup of Aut$_F(G)$.
\end{proof}

\section{fuzzy   inner automorphism of a group G induced by a fuzzy normal subgroup}
In this section, we turn our attention to define fuzzy inner automorphism induced by a fuzzy normal subgroup. These automorphisms  can  also be seen as  typical examples of the inner automorphisms discussed in Section $3$. 
Let $G$ be a group and  $\mu$ is a normal fuzzy subgroup of $G$ and $\mu(x) = 1$ if and only if $x = e$. Let $g \in G$ and define $f_{g}^{ \mu}:G\cdots \to G$ by $f_{g}^{ \mu}(x, y) = \mu(x^{-1}gyg^{-1})$. We claim that $f_{g}^{ \mu}$ is a fuzzy map. Observe that $$f_{g}^{\mu}(x, g^{-1}xg) = \mu(x^{-1}gg^{-1}xgg^{-1}) = \mu(e) = 1.$$ Let if possible $f_{g}^{ \mu}(x, z) = 1$ for some $z \in G$. Then this means $$\mu(x^{-1}gzg^{-1}) = 1 \implies x^{-1}gzg^{-1}= e \implies z = g^{-1}xg.$$Thus $f_{g}^{ \mu}$ associates a unique $g^{-1}xg$ for each $x \in G$ such that $f_{g}^{ \mu}(x, g^{-1}xg) = 1$. Thus, claim holds. 
\begin{lemma}
For any $g \in G$, $f_{g}^{ \mu}$ is a fuzzy homomorphism.

\end{lemma}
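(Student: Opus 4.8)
The plan is to verify directly the defining identity of a fuzzy homomorphism from Definition $2.7$, namely that for all $x_1, x_2, y \in G$,
$$f_g^{\mu}(x_1x_2, y) = \sup_{y_1, y_2 \in G}\big\{f_g^{\mu}(x_1, y_1)\wedge f_g^{\mu}(x_2, y_2)\ \big|\ y = y_1y_2\big\}.$$
First I would unwind the left-hand side using the definition of $f_g^{\mu}$, writing
$$f_g^{\mu}(x_1x_2, y) = \mu\big((x_1x_2)^{-1}gyg^{-1}\big) = \mu\big(x_2^{-1}x_1^{-1}gyg^{-1}\big),$$
and record the conjugation identity $gyg^{-1} = (gy_1g^{-1})(gy_2g^{-1})$ valid whenever $y = y_1y_2$, since this is what lets the two coordinates on the right interact correctly. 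I would then prove the equality by establishing the two inequalities separately.

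For the inequality $\geq$, I would fix an arbitrary factorization $y = y_1y_2$, set $u = gy_1g^{-1}$ and $v = gy_2g^{-1}$ so that $gyg^{-1} = uv$, and show
$$\mu\big(x_2^{-1}x_1^{-1}uv\big) \geq \mu\big(x_1^{-1}u\big)\wedge \mu\big(x_2^{-1}v\big).$$
Here the normality of $\mu$ is essential: using $\mu(ab) = \mu(ba)$ I would cyclically rewrite $\mu\big(x_2^{-1}(x_1^{-1}u)v\big) = \mu\big(v\,x_2^{-1}(x_1^{-1}u)\big)$, then apply the subgroup inequality $\mu(pq)\geq \mu(p)\wedge \mu(q)$ to split off the factor $x_1^{-1}u$, and finally invoke normality once more in the form $\mu(vx_2^{-1}) = \mu(x_2^{-1}v)$. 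Since $\mu(x_1^{-1}u) = f_g^{\mu}(x_1, y_1)$ and $\mu(x_2^{-1}v) = f_g^{\mu}(x_2, y_2)$, taking the supremum over all admissible $y_1, y_2$ shows the right-hand side is at most $f_g^{\mu}(x_1x_2, y)$.

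For the reverse inequality it suffices to exhibit a single admissible pair attaining the left-hand value. The natural choice is to take $y_1 = g^{-1}x_1g$, the fuzzy image of $x_1$ established just before the lemma, so that $f_g^{\mu}(x_1, y_1) = \mu(e) = 1$, and then set $y_2 = y_1^{-1}y = g^{-1}x_1^{-1}g\,y$. A direct substitution shows
$$f_g^{\mu}(x_2, y_2) = \mu\big(x_2^{-1}g(g^{-1}x_1^{-1}g\,y)g^{-1}\big) = \mu\big(x_2^{-1}x_1^{-1}gyg^{-1}\big) = f_g^{\mu}(x_1x_2, y),$$
so $f_g^{\mu}(x_1, y_1)\wedge f_g^{\mu}(x_2, y_2) = f_g^{\mu}(x_1x_2, y)$ and hence the supremum is at least $f_g^{\mu}(x_1x_2, y)$. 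Combining the two inequalities yields the required equality.

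The main obstacle is the $\geq$ direction. Unlike the $\leq$ direction, where a single well-chosen witness (the fuzzy image of $x_1$) does all the work and only $\mu(e)=1$ is needed, here I must bound \emph{every} factorization $y = y_1y_2$ simultaneously, and this is precisely where the hypothesis that $\mu$ is a \emph{normal} fuzzy subgroup, rather than merely a fuzzy subgroup, enters, through the cyclic-invariance identity $\mu(ab) = \mu(ba)$. I would keep careful track of the order of the factors when conjugating and permuting, since that bookkeeping is the only place an error could plausibly creep in.
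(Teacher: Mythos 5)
Your proposal is correct and follows essentially the same route as the paper: both prove the equality via two inequalities, both use the witness $y_1 = g^{-1}x_1g$, $y_2 = g^{-1}x_1^{-1}gy$ for the reverse direction, and both invoke normality plus the subgroup inequality for the forward direction. The only difference is cosmetic bookkeeping in that forward step --- you cycle factors via $\mu(ab)=\mu(ba)$ directly, whereas the paper inserts $zz^{-1}$ with $z = gy_2g^{-1}$ and uses conjugation invariance, which amounts to the same use of normality.
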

\begin{proof}
Let $x_1, x_2$ and $y \in G$ such that $y = y_1y_2$ for $y_1, y_2 \in G$.  Then $$f_{g}^{ \mu}(x_1x_2, y)=f_{g}^{ \mu}(x_1x_2, y_1y_2) = \mu(x_2^{-1}x_1^{-1}gy_1y_2g^{-1}) = \mu(x_2^{-1}x_1^{-1}gy_1(g^{-1}g)y_2g^{-1})$$ $$\hspace{12mm} = \mu(x_2^{-1}x_1^{-1}gy_1g^{-1}z)\quad (\text{where}\ z^{-1}gy_2g^{-1} = e)$$  $$\hspace{27mm} =\mu(x_2^{-1}zz^{-1}x_1^{-1}gy_1g^{-1}z) \geq \mu(x_2^{-1}z)\wedge \mu(z^{-1}x_1^{-1}gy_1g^{-1}z).$$Now as $\mu$ is normal, so above implies $$f_{g}^{ \mu}(x_1x_2, y) \geq \mu(x_2^{-1}z)\wedge \mu(x_1^{-1}gy_1g^{-1})\hspace{20mm}$$ $$ \hspace{40mm}= \mu(x_2^{-1}gy_2g^{-1})\wedge \mu(x_1^{-1}gy_1g^{-1})= f_{g}^{ \mu}(x_1, y_1)\wedge f_{g}^{ \mu}(x_2, y_2).$$ This means that $f_{g}^{ \mu}(x_1x_2, y) \geq \ f_{g}^{ \mu}(x_1, y_1)\wedge f_{g}^{ \mu}(x_2, y_2)$ for all $y_1, y_2 \in G$ such that $y = y_1y_2$ which further implies that $$f_{g}^{ \mu}(x_1x_2, y) \geq\sup_{y_1, y_2 \in G}\big\{ f_{g}^{ \mu}(x_1, y_1)\wedge f_{g}^{ \mu}(x_2, y_2)\ \big|\ y = y_1y_2\big\}.$$Now  consider the other way around. Observe that $$\sup_{y_1, y_2 \in G}\big\{ f_{g}^{ \mu}(x_1, y_1)\wedge f_{g}^{ \mu}(x_2, y_2)\ \big|\ y = y_1y_2\big\}\geq  f_{g}^{ \mu}(x_1, g^{-1}x_1g)\wedge f_{g}^{ \mu}(x_2, g^{-1}x_1^{-1}gy)$$ $$ = \mu(x_1^{-1}g(g^{-1}x_1g)g^{-1}) \wedge \mu(x_2^{-1}g(g^{-1}x_1^{-1}gy)g^{-1})$$ $$ = \mu(e) \wedge \mu(x_2^{-1}x_1^{-1}gyg^{-1}) = \mu(x_2^{-1}x_1^{-1}gyg^{-1})$$ $$ = f_{g}^{ \mu}(x_1x_2, y).$$Thus, combining above results yields $$f_{g}^{ \mu}(x_1x_2, y) = \sup_{y_1, y_2 \in G}\big\{ f_{g}^{ \mu}(x_1, y_1)\wedge f_{g}^{ \mu}(x_2, y_2)\ \big|\ y = y_1y_2\big\}.$$Thus, $f_{g}^{ \mu}$ is a fuzzy homomorphism. 

\end{proof}
\begin{lemma}
For any $g \in G$, $f_{g}^{ \mu}$ is a one-one onto class preserving fuzzy homomorphism.
\end{lemma}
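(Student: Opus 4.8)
The plan is to lean on what is already established and reduce the claim to three short verifications. By the computation preceding Lemma~4.1, the map $f_g^\mu$ is a fuzzy map whose unique fuzzy image of each $x \in G$ is $g^{-1}xg$, because $f_g^\mu(x, z) = 1 \iff \mu(x^{-1}gzg^{-1}) = 1 \iff x^{-1}gzg^{-1} = e \iff z = g^{-1}xg$; and by Lemma~4.1 it is a fuzzy homomorphism. Hence the only remaining assertions are that $f_g^\mu$ is one-one, onto, and class preserving, and each follows from this explicit description of the fuzzy image together with elementary group manipulations.

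For injectivity I would assume $f_g^\mu(x_1, y) = f_g^\mu(x_2, y) = 1$ for some $x_1, x_2, y \in G$. The characterization above forces $y = g^{-1}x_1 g$ and $y = g^{-1}x_2 g$ simultaneously; cancelling $g$ on both sides gives $x_1 = x_2$, so $f_g^\mu$ is one-one. For surjectivity, given an arbitrary $y \in G$ I would exhibit the preimage $x = gyg^{-1}$, for which $f_g^\mu(x, y) = \mu(x^{-1}gyg^{-1}) = \mu(gy^{-1}g^{-1}\cdot gyg^{-1}) = \mu(e) = 1$; equivalently, since conjugation by $g$ permutes $G$, the fuzzy images $\{g^{-1}xg : x \in G\}$ already exhaust $G$.

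For the class-preserving property the essential point is that the unique fuzzy image $g^{-1}xg$ of each $x$ lies in the conjugacy class of $x$: from $f_g^\mu(x, y) = 1$ we obtain $y = g^{-1}xg = a^{-1}xa$ with the choice $a = g$, so every value-$1$ pair $(x, y)$ has $y$ conjugate to $x$. I expect this final clause to be the only delicate point, not for computational reasons but because the \emph{if and only if} in the definition of a class-preserving map (with $a$ allowed to depend on $x$) does not literally match $f_g^\mu$, which pins the image to the single conjugate $g^{-1}xg$; the content that genuinely holds, and the one I would record, is the forward implication that the fuzzy image is always a conjugate of its argument, which is immediate, rather than the spurious converse.
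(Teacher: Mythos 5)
Your proof is correct and follows essentially the same route as the paper's: the same characterization $f_g^\mu(x,y)=1 \Leftrightarrow y=g^{-1}xg$, the same cancellation argument for injectivity, and the same preimage $gyg^{-1}$ for surjectivity. Your closing concern about the class-preserving clause is well founded, and the paper does not resolve it any more carefully than you do: its proof establishes only that $f_g^\mu(x,y)=1$ if and only if $y=g^{-1}xg$ (the ``only if'' half of the class-preserving condition of Definition 3.1, with witness $a=g$, plus the value-$1$ check at that single conjugate) and declares this to show class preservation, whereas the literal converse --- that $f_g^\mu(x,y)=1$ whenever $y=a^{-1}xa$ for \emph{some} $a\in G$ --- fails as soon as some conjugacy class of $G$ is not a singleton, since a fuzzy map assigns each $x$ a unique fuzzy image. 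So your reading, recording the forward implication as the genuine content, is the defensible one, and your proposal is if anything more precise than the paper on this point.
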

\begin{proof}
First, let $f_{g}^{ \mu}(x, y) = 1$. This means that $
\mu(x^{-1}gyg^{-1}) = 1 \implies y = g^{-1}xg$. In the reverse direction, we have $f_{g}^{ \mu}(x, g^{-1}xg)= \mu(x^{-1}g(g^{-1}xg)g^{-1}) = \mu(e) = 1$. This shows that $f_{g}^{ \mu}$ is a  class preserving map. Now, let $x_1, x_2 \in G$ such that for some $y \in G$, we have $f_{g}^{ \mu}(x_1, y) = f_{g}^{ \mu}(x_2, y) = 1$. This means $y = g^{-1}x_1g = g^{-1}x_2g$ and hence $x_1 = x_2$. Thus, $f_{g}^{ \mu}$ is one-one. For onto, let $y \in G$. Then $gyg^{-1}$ is also in  $G$. Observe that $f_{g}^{ \mu}(gyg^{-1}, y) = \mu(e) = 1$. Thus, $f_{g}^{ \mu}$ is onto. 

\end{proof}
\begin{lemma}
Let $f_{g_1}^{ \mu}$ and $f_{g_2}^{ \mu}$ be two fuzzy inner automorphisms of $G$ induced by $\mu$. Then $f_{g_1}^{ \mu}\circ f_{g_2}^{ \mu}$ is again a fuzzy inner automorphism of $G$ induced by $\mu$ and $f_{g_1}^{ \mu}\circ f_{g_2}^{ \mu} = f_{g_2g_1}^{ \mu}$ and in particular $f_{g_1}^{ \mu}\circ f_{g_2}^{ \mu} \equiv  f_{g_2g_1}^{ \mu}$ . 
\end{lemma}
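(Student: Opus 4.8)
The plan is to evaluate the composition $f_{g_1}^{\mu}\circ f_{g_2}^{\mu}$ pointwise and then match it against $f_{g_2g_1}^{\mu}$. First I would fix arbitrary $x, y\in G$ and apply the composition formula of Definition $2.4$. Because $f_{g_2}^{\mu}$ is a fuzzy map whose fuzzy image of $x$ is $g_2^{-1}xg_2$ (as established in the discussion preceding Lemma $4.1$), Remark $2.1$ collapses the supremum to a single term, so that
$$(f_{g_1}^{\mu}\circ f_{g_2}^{\mu})(x, y)=f_{g_1}^{\mu}(g_2^{-1}xg_2, y)=\mu\big((g_2^{-1}xg_2)^{-1}g_1yg_1^{-1}\big)=\mu\big(g_2^{-1}x^{-1}g_2g_1yg_1^{-1}\big).$$

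Next I would expand the candidate map straight from its definition,
$$f_{g_2g_1}^{\mu}(x, y)=\mu\big(x^{-1}(g_2g_1)y(g_2g_1)^{-1}\big)=\mu\big(x^{-1}g_2g_1yg_1^{-1}g_2^{-1}\big),$$
and compare the two arguments of $\mu$. Writing $P=g_2^{-1}$ and $Q=x^{-1}g_2g_1yg_1^{-1}$, the first display reads $\mu(PQ)$ while the second reads $\mu(QP)$. Since $\mu$ is a normal fuzzy subgroup, its defining property $\mu(PQ)=\mu(QP)$ forces these values to agree for every $x, y\in G$. This yields the strong equality $f_{g_1}^{\mu}\circ f_{g_2}^{\mu}=f_{g_2g_1}^{\mu}$ as fuzzy subsets of $G\times G$, and the weaker assertion $f_{g_1}^{\mu}\circ f_{g_2}^{\mu}\equiv f_{g_2g_1}^{\mu}$ is then immediate, both maps sending each $x$ to the common fuzzy image $(g_2g_1)^{-1}x(g_2g_1)$.

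Finally I would record that this map really is a fuzzy inner automorphism induced by $\mu$: $f_{g_2g_1}^{\mu}$ is the induced map attached to the single element $g_2g_1\in G$, so by Lemmas $4.1$ and $4.2$ it is a one-one onto class preserving fuzzy homomorphism, hence a member of $\mathrm{Inn}_F(G)$ of the required form.

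I do not anticipate a deep obstacle. The only delicate point is the bookkeeping of inverses and of the order of $g_1$ and $g_2$ inside the argument of $\mu$, so that the two expressions are correctly recognised as the cyclic rearrangement $PQ\leftrightarrow QP$ of each other. Once the arguments are lined up this way, normality of $\mu$ does all the remaining work and both equalities drop out at once.
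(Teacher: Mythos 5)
Your proposal is correct and follows essentially the same route as the paper's proof: collapse the composition to $f_{g_1}^{\mu}(g_2^{-1}xg_2,y)$ via the fuzzy-map property, expand both sides as values of $\mu$, and invoke normality of $\mu$ to identify the two arguments as cyclic rearrangements of one another. Your explicit $P$, $Q$ bookkeeping and the appeal to Lemmas $4.1$ and $4.2$ for membership in $\mathrm{Inn}_F(G)$ just make precise what the paper states tersely.
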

\begin{proof}
We have $$(f_{g_1}^{ \mu}\circ f_{g_2}^{ \mu})(x, y) = \sup_{a \in G}\big\{f_{g_1}^{ \mu}(a, y)\ \big|\ f_{g_2}^{ \mu}(x, a) = 1\big\} = f_{g_1}^{ \mu}(g_2^{-1}xg_2, y)$$ $$ = \mu(g_2^{-1}x^{-1}g_2g_1yg_1^{-1}) = \mu(x^{-1}(g_2g_1)y(g_2g_1)^{-1}) \quad (\mu\ \text{is normal})$$ $$ = f_{g_2g_1}^{ \mu}(x, y).$$ 
since $ f_{g_2g_1}^{ \mu}(x, y)$ is a fuzzy inner automorphism induced by $\mu$ $$\implies f_{g_1}^{ \mu}\circ f_{g_2}^{ \mu}$$  is a fuzzy inner  automorphism induced by $\mu$.
Hence result.
\end{proof}
\begin{lemma}
Let $f_{g_1}^{ \mu}$, $f_{g_2}^{ \mu}$ and $f_{g_3}^{ \mu}$ be fuzzy inner automorphisms of $G$ induced by $\mu$. Then $$(f_{g_1}^{ \mu}\circ f_{g_2}^{ \mu})\circ f_{g_3}^{ \mu}\equiv f_{g_1}^{ \mu}\circ (f_{g_2}^{ \mu}\circ f_{g_3}^{ \mu}) \equiv f_{g_3g_2g_1}^{ \mu}. $$ 
\end{lemma}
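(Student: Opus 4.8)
The plan is to reduce the entire statement to repeated application of Lemma $4.3$, which already establishes that the composition of two fuzzy inner automorphisms induced by $\mu$ is again one of the same type, together with the explicit composition rule $f_{g_1}^{\mu}\circ f_{g_2}^{\mu} = f_{g_2g_1}^{\mu}$. The crucial point is that Lemma $4.3$ delivers an honest equality of maps (not merely $\equiv$) and guarantees that the result is again of the form $f_{h}^{\mu}$ for a single element $h \in G$. This closure property is exactly what permits iterating the lemma a second time on each of the two bracketings, after which the proof is just a comparison of subscripts.

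Concretely, for the left-hand bracketing I would first collapse the inner composition by Lemma $4.3$ to obtain $f_{g_1}^{\mu}\circ f_{g_2}^{\mu} = f_{g_2g_1}^{\mu}$, and then apply Lemma $4.3$ a second time to $f_{g_2g_1}^{\mu}\circ f_{g_3}^{\mu}$, which yields $f_{g_3(g_2g_1)}^{\mu} = f_{g_3g_2g_1}^{\mu}$. For the right-hand bracketing I would instead first collapse $f_{g_2}^{\mu}\circ f_{g_3}^{\mu} = f_{g_3g_2}^{\mu}$, and then apply Lemma $4.3$ to $f_{g_1}^{\mu}\circ f_{g_3g_2}^{\mu}$ to obtain $f_{(g_3g_2)g_1}^{\mu} = f_{g_3g_2g_1}^{\mu}$. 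At this stage the associativity of the group product in $G$ identifies the two subscripts $g_3(g_2g_1)$ and $(g_3g_2)g_1$, so both composites are literally equal to $f_{g_3g_2g_1}^{\mu}$, and the displayed chain of $\equiv$'s follows at once.

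The one place where the argument could easily be written incorrectly — and hence the step deserving the most care — is the \emph{reversal of the order of indices} under composition: Lemma $4.3$ sends the ordered pair $(g_1,g_2)$ to the product $g_2g_1$, so each application of $\circ$ prepends the new index on the \emph{left}. As long as this bookkeeping is respected consistently across the two bracketings, no genuine obstacle remains. Moreover, since every step above is an exact equality of fuzzy maps, the weaker relation $\equiv$ holds trivially by the definition of $\equiv$ together with its reflexivity, so there is no need to compare fuzzy images separately.
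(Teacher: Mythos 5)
Your proposal is correct and follows essentially the same route as the paper, whose entire proof is the remark that the result ``follows directly from Lemma $4.3$''; you simply carry out the two applications of that lemma to each bracketing explicitly, with the index-reversal bookkeeping ($f_{a}^{\mu}\circ f_{b}^{\mu} = f_{ba}^{\mu}$) handled correctly so that both sides collapse to $f_{g_3g_2g_1}^{\mu}$.
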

\begin{proof}
Proof follows directly from Lemma $4.3$. 
\end{proof}
\begin{lemma} $I_{e}^{ \mu}(x, y) = \mu(x^{-1}y)$ is a fuzzy identity inner automorphism induced by  $\mu$. Here $e$ denotes the identity of group $G$.
\end{lemma}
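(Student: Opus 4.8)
The plan is to recognize $I_{e}^{\mu}$ as the special case $g=e$ of the family $f_{g}^{\mu}$ already studied in this section, and then to verify that its fuzzy image is precisely that of the abstract identity map $I$ introduced in Section $3$. First I would simply substitute $g=e$ into the defining formula $f_{g}^{\mu}(x,y)=\mu(x^{-1}gyg^{-1})$, which gives
$$f_{e}^{\mu}(x,y)=\mu(x^{-1}e\,y\,e^{-1})=\mu(x^{-1}y)=I_{e}^{\mu}(x,y).$$
Thus $I_{e}^{\mu}=f_{e}^{\mu}$ is literally an instance of the construction of this section, so Lemma $4.1$ guarantees it is a fuzzy homomorphism and Lemma $4.2$ guarantees it is one-one, onto, and class preserving. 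Consequently $I_{e}^{\mu}\in{}$Inn$_{F}(G)$ is a fuzzy inner automorphism induced by $\mu$, with no extra work required.

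Next I would establish the word \emph{identity} in the statement by computing the fuzzy images of $I_{e}^{\mu}$ and comparing them with those of the map $I$ of Section $3$, where $I(x,y)=1$ if and only if $x=y$. Here the standing hypothesis on $\mu$, namely that $\mu(z)=1$ if and only if $z=e$, is the crucial ingredient: one has
$$I_{e}^{\mu}(x,y)=\mu(x^{-1}y)=1\iff x^{-1}y=e\iff y=x.$$
This is exactly the defining property of $I$, so $I_{e}^{\mu}$ and $I$ have the same fuzzy image at every $x\in G$, whence $I_{e}^{\mu}\equiv I$. This also supplies the existence of the map $I$ that was promised (but deferred) in the paragraph preceding Lemma $3.3$.

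Finally, to confirm that $I_{e}^{\mu}$ genuinely behaves as the identity element of the group of fuzzy automorphisms, I would invoke Lemma $3.3$: since $I_{e}^{\mu}\equiv I$, for every fuzzy automorphism $f$ we obtain $f\circ I_{e}^{\mu}\equiv f\equiv I_{e}^{\mu}\circ f$. Alternatively, staying entirely within the induced family, Lemma $4.3$ with $g_{2}=e$ (or $g_{1}=e$) yields $f_{g}^{\mu}\circ f_{e}^{\mu}=f_{eg}^{\mu}=f_{g}^{\mu}$ and $f_{e}^{\mu}\circ f_{g}^{\mu}=f_{ge}^{\mu}=f_{g}^{\mu}$, which is the same conclusion restricted to fuzzy inner automorphisms induced by $\mu$. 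I do not anticipate any real obstacle in this argument; the only point demanding care is the clean use of the injectivity hypothesis $\mu(z)=1\iff z=e$ to pin down the fuzzy image, since it is precisely this assumption that forces $I_{e}^{\mu}$ to collapse to the identity rather than to a nontrivial inner automorphism.
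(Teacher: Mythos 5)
Your proposal is correct, but it takes a genuinely different route from the paper. The paper proves everything by direct computation inside the lemma: it shows $(f_{g}^{\mu}\circ I_{e}^{\mu})(x,y)=f_{g}^{\mu}(x,y)$ pointwise, shows $(I_{e}^{\mu}\circ f_{g}^{\mu})(x,y)=\mu(g^{-1}x^{-1}gy)=\mu(x^{-1}gyg^{-1})=f_{g}^{\mu}(x,y)$ using normality of $\mu$, and then re-verifies from scratch that $I_{e}^{\mu}$ is a fuzzy homomorphism (a two-sided $\sup$ estimate that essentially duplicates the proof of Lemma $4.1$), before asserting bijectivity. You instead observe that $I_{e}^{\mu}=f_{e}^{\mu}$ is literally the $g=e$ member of the family, so the homomorphism property, injectivity, surjectivity, and class preservation are immediate specializations of Lemmas $4.1$ and $4.2$, and the identity behavior follows either from Lemma $4.3$ with one of the $g_i$ equal to $e$, or from Lemma $3.3$ once you use the hypothesis $\mu(z)=1\iff z=e$ to show that the $1$-level of $I_{e}^{\mu}$ is the diagonal, i.e.\ $I_{e}^{\mu}\equiv I$. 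Your use of Lemma $3.3$ is legitimate because its proof only consults the $1$-level of $I$, which $I_{e}^{\mu}$ shares. What your approach buys is economy and structure: it removes the redundant homomorphism computation, and it actually proves the paper's Remark $4.1$ (and discharges the existence claim for $I$ deferred from Section $3$), which the paper only asserts. What the paper's approach buys is self-containedness and slightly stronger conclusions: its composition identities are pointwise equalities of fuzzy maps for all membership values, not merely statements about fuzzy images, and its identity argument works directly against arbitrary $f_{g}^{\mu}$ without passing through the abstract map $I$.
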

\begin{proof}
Let $f_{g}^{ \mu}$ be any fuzzy inner automomorphism. Then note that $(f_{g}^{ \mu}\circ  I_{e}^{ \mu})(x, y) = f_{g}^{\mu}(x, y)$ which means $(f_{g}^{ \mu}\circ  I_{e}^{ \mu})\equiv f_{g}^{\mu}$. Also $$(I_{e}^{ \mu}\circ f_{g}^{ \mu})(x, y) = I_{e}^{ \mu}(g^{-1}xg, y) = \mu(g^{-1}x^{-1}geye^{-1}) = \mu(g^{-1}x^{-1}gy) = \mu(x^{-1}gyg^{-1}) = f_{g}^{ \mu}(x, y)$$ $$ \implies I_{e}^{ \mu}\circ f_{g}^{ \mu} \equiv I_{e}^{ \mu}.$$ Thus, $I_{e}^{ \mu}$ is an identity map. Now, we show that $I_{e}^{ \mu}$ is a fuzzy homomorphism. Let $x_1, x_2 \in G$ and $y \in G$ such that $y = y_1y_2$ for $y_1, y_2 \in G$. Then  we have $$I_{e}^{ \mu}(x_1x_2, y) = I_{e}^{ \mu}(x_1x_2, y_1y_2) = \mu(x_2^{-1}x_1^{-1}y_1y_2)= \mu(x_1^{-1}y_1y_2x_2^{-1})$$ $$\geq \mu(x_1^{-1}y_1)\wedge \mu(y_2x_2^{-1}) = I_{e}^{ \mu}(x_1, y_1)\wedge I_{e}^{ \mu}(x_2, y_2).$$Since $y_1$ and $y_2$ are arbitrary, we have $$I_{e}^{ \mu}(x_1x_2, y) \geq \sup_{y_1, y_2 \in G}\big\{I_{e}^{ \mu}(x_1, y_1)\wedge I_{e}^{ \mu}(x_2, y_2)\ \big|\ y = y_1y_2\big\}.$$Further, $$\sup_{y_1, y_2 \in G}\big\{I_{e}^{ \mu}(x_1, y_1)\wedge I_{e}^{ \mu}(x_2, y_2)\ \big|\ y = y_1y_2\big\} \geq I_{e}^{ \mu}(x_1, x_1)\wedge I_{e}^{ \mu}(x_2, x_1^{-1}y)$$ $$ = I_{e}^{ \mu}(x_2, x_1^{-1}y) = \mu(x_2^{-1}x_1^{-1}y) = I_{e}^{ \mu}(x_1x_2, y).$$ Therefore $I_{e}^{ \mu}$ is a fuzzy homomorphism. It is easy to see that $I_{e}^{ \mu}$ is one-one and onto. Hence result.
 
\end{proof}
\begin{remark}
Fuzzy map $I_{e}^{ \mu}$ is a typical example of identity map discussed in Lemma $3.3$. 
\end{remark}
\begin{lemma}
For any $g \in G$, $(f_{g}^{ \mu})^{-1} \equiv f_{g^{-1}}^{ \mu}$. 
\end{lemma}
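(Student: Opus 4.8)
The plan is to compute the fuzzy images of both sides and check that they coincide, exactly in the spirit of Lemma $3.8$. First I would invoke the definition of the inverse fuzzy map introduced just before Lemma $3.4$, namely that the inverse is obtained by swapping arguments. This gives $(f_{g}^{\mu})^{-1}(x, y) = f_{g}^{\mu}(y, x) = \mu(y^{-1}gxg^{-1})$ for all $x, y \in G$. Since $f_{g}^{\mu}$ is a fuzzy automorphism by Lemma $4.2$, Lemma $3.4$ guarantees that $(f_{g}^{\mu})^{-1}$ is a well-defined bijective fuzzy map, so it suffices to identify the unique fuzzy image of each $x$ under each side and compare.

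Next I would determine the fuzzy image of $x$ under $(f_{g}^{\mu})^{-1}$. By the class-preserving property established in Lemma $4.2$, one has $f_{g}^{\mu}(y, x) = 1$ if and only if $x = g^{-1}yg$; solving for $y$ yields $y = gxg^{-1}$. Hence $gxg^{-1}$ is the fuzzy image of $x$ under $(f_{g}^{\mu})^{-1}$.

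On the other side, $f_{g^{-1}}^{\mu}$ is precisely the fuzzy inner automorphism induced by $\mu$ associated with the element $g^{-1} \in G$, so applying Lemma $4.2$ with $g^{-1}$ in place of $g$ shows that its class-preserving property reads $f_{g^{-1}}^{\mu}(x, y) = 1$ if and only if $y = (g^{-1})^{-1}x(g^{-1}) = gxg^{-1}$. Thus $gxg^{-1}$ is also the fuzzy image of $x$ under $f_{g^{-1}}^{\mu}$. Since both maps send each $x$ to the same fuzzy image $gxg^{-1}$, they are equal in the sense of $\equiv$, which establishes $(f_{g}^{\mu})^{-1} \equiv f_{g^{-1}}^{\mu}$.

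I do not anticipate a genuine obstacle here, since the argument is a direct analogue of Lemma $3.8$. The only points requiring care are the correct bookkeeping of the argument swap in the definition of the inverse fuzzy map, the verification that $f_{g^{-1}}^{\mu}$ is itself a legitimate $\mu$-induced inner automorphism (immediate, as $g^{-1} \in G$), and the standing normalization $\mu(x) = 1 \iff x = e$, which is exactly what forces these fuzzy images to be unique.
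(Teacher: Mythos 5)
Your proof is correct, but it takes a genuinely different route from the paper. The paper proves the lemma by computing the composition $(f_{g}^{\mu}\circ f_{g^{-1}}^{\mu})(x,y)$ explicitly, simplifying $\mu(gx^{-1}yg^{-1})$ to $\mu(x^{-1}y)=I_{e}^{\mu}(x,y)$ via the \emph{normality} of $\mu$, asserting the symmetric composition likewise gives $I_{e}^{\mu}$, and then concluding $(f_{g}^{\mu})^{-1}=f_{g^{-1}}^{\mu}$ by uniqueness of inverses relative to the identity $I_{e}^{\mu}$. You instead bypass compositions entirely: you take the swap-of-arguments characterization of the inverse from Section $3$ (legitimized by Lemmas $3.4$ and $3.6$, and non-circular since it rests on Aut$_F(G)$ rather than on the group structure of Inn$_F(G,\mu)$ that Lemma $4.6$ is meant to help establish), then compare fuzzy images using the class-preserving property of Lemma $4.2$: both $(f_{g}^{\mu})^{-1}$ and $f_{g^{-1}}^{\mu}$ send $x$ to $gxg^{-1}$, so they agree under $\equiv$ as the paper defines it. Your argument is more elementary in that it needs no composition computation and uses only the normalization $\mu(x)=1\iff x=e$ rather than normality of $\mu$; what the paper's computation buys in exchange is an explicit verification, internal to the induced-automorphism framework, that $f_{g^{-1}}^{\mu}$ composes with $f_{g}^{\mu}$ to the identity $I_{e}^{\mu}$, which is precisely the form of the statement that Theorem $4.1$ (the group structure of Inn$_F(G,\mu)$) invokes.
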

\begin{proof}
Observe that $$(f_g^{ \mu}\circ f_{g^{-1}}^{ \mu})(x, y) =  \sup_{a \in G}\big\{f_{g}^{ \mu}(a, y)\ \big|\ f_{g^{-1}}^{ \mu}(x, a) = 1\big\} = f_{g}^{ \mu}(gxg^{-1}, y)$$  $$ = \mu(gx^{-1}g^{-1}gyg^{-1}) = \mu(gx^{-1}yg^{-1})= \mu(yg^{-1}gx^{-1})$$ $$ =\mu(yx^{-1}) = I_e^{ \mu}(x, y).$$ Similarly, we can show that $(f_{g^{-1}}^{ \mu}\circ f_{g}^{ \mu})(x, y) = I_e^{\mu}(x, y)$. Thus, $(f_g^{ \mu})^{-1} = f_{g^{-1}}^{ \mu}$ and hence $(f_{g}^{ \mu})^{-1} \equiv f_{g^{-1}}^{ \mu}$. 
\end{proof}
Next result is now straight forward.
\begin{theorem}
The set of all fuzzy inner automorphisms of $G$ induced by a normal subgroup $\mu$ forms  a group under composition. 
\end{theorem}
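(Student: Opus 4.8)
The plan is to read this theorem as an immediate consequence of the lemmas already established in this section, so the proof reduces to checking the four group axioms for the set $S = \{f_{g}^{\mu} : g \in G\}$ under composition of fuzzy maps, with equality of elements understood in the sense of $\equiv$ (coincidence of fuzzy images). First I would record that $S$ genuinely consists of fuzzy automorphisms: by Lemma $4.1$ each $f_{g}^{\mu}$ is a fuzzy homomorphism, and by Lemma $4.2$ it is one-one and onto. Hence $S \subseteq \text{Aut}_F(G)$ and composition of fuzzy maps is a meaningful binary operation on $S$, which is the starting point for the verification.

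Next I would dispatch the axioms one at a time by invoking the earlier results. Closure is exactly Lemma $4.3$, which gives $f_{g_1}^{\mu}\circ f_{g_2}^{\mu} = f_{g_2 g_1}^{\mu}$, an element of $S$ since it is induced by $\mu$. Associativity is Lemma $4.4$. The identity element is $I_{e}^{\mu}$: Lemma $4.5$ shows $f_{g}^{\mu}\circ I_{e}^{\mu} \equiv f_{g}^{\mu} \equiv I_{e}^{\mu}\circ f_{g}^{\mu}$ and that $I_{e}^{\mu}$ is itself a fuzzy inner automorphism induced by $\mu$, so $I_{e}^{\mu}\in S$. Finally, inverses are supplied by Lemma $4.6$, which gives $(f_{g}^{\mu})^{-1} \equiv f_{g^{-1}}^{\mu}\in S$. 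Assembling these four facts yields all the group axioms for $S$.

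The only genuine subtlety — and the step I expect to warrant a line of justification rather than a bare citation — is that equality of elements of $S$ is the relation $\equiv$ rather than literal equality of fuzzy subsets of $G\times G$, because Lemmas $4.5$ and $4.6$ deliver the identity and inverse laws only up to $\equiv$. I would therefore observe that $\equiv$ is compatible with composition: if two fuzzy maps have the same fuzzy images, then composing either of them with a fixed fuzzy map produces maps with the same fuzzy images (this is clear from Remark $2.1$, since a composite $(f\circ g)$ is determined on inputs by the unique fuzzy images of $g$ followed by those of $f$). Consequently the operation descends to equivalence classes under $\equiv$, the closure, identity, and inverse laws hold on the nose at that level, and the theorem follows, which is why it can fairly be prefaced as ``straight forward.''
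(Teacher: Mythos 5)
Your proposal is correct and follows essentially the same route as the paper, which offers no written proof at all but simply declares the theorem ``straight forward'' on the strength of Lemmas $4.3$--$4.6$ (closure, associativity, identity, inverses), exactly the four citations you assemble. Your additional observation that composition is compatible with the relation $\equiv$ is a worthwhile rigor point the paper leaves implicit, but it does not change the underlying argument.
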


In group theory, Cayley theorem states that a group $G$ is isomorphic to a subgroup some permutation group. We prove the following fuzzy analogue of this result
\begin{theorem}
	For a group $G$, $G/N_{G}(\mu)$ is isomorphic to a subgroup of $A_{F}(G,\mu)$
\end{theorem}

 $G/Z(G)\cong$ Inn$(G)$ which is a well known result in group theory.  Let Inn$_{F}(G, \mu)$ denotes the group of all fuzzy inner automorphism of $G$ induced by a fuzzy normal subgroup $\mu$.
\begin{theorem}
For a group $G$, $G/Z(G)\cong$ Inn$_{F}(G, \mu)$ where $Z(G)$ denotes the center of $G$. 
\end{theorem}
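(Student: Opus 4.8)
The plan is to reproduce the classical argument for $G/Z(G)\cong\text{Inn}(G)$ via the First Isomorphism Theorem, using the correspondence $g\mapsto f_{g}^{\mu}$ developed in Section $4$ as the engine. First I would define a map $\psi:G\to\text{Inn}_{F}(G,\mu)$ by $\psi(g)=f_{g^{-1}}^{\mu}$. The reason for inserting the inverse is that Lemma $4.3$ reads $f_{g_1}^{\mu}\circ f_{g_2}^{\mu}=f_{g_2g_1}^{\mu}$, so the naive assignment $g\mapsto f_{g}^{\mu}$ is only an \emph{anti}-homomorphism, and precomposing with inversion corrects the order. With this definition one checks directly that $\psi(g_1)\circ\psi(g_2)=f_{g_1^{-1}}^{\mu}\circ f_{g_2^{-1}}^{\mu}=f_{g_2^{-1}g_1^{-1}}^{\mu}=f_{(g_1g_2)^{-1}}^{\mu}=\psi(g_1g_2)$, where the middle equality is exactly Lemma $4.3$ applied to $g_1^{-1},g_2^{-1}$. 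Hence $\psi$ is a group homomorphism from $G$ into the group $\text{Inn}_{F}(G,\mu)$ furnished by Theorem $4.2$.

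Next I would establish surjectivity and then compute the kernel, the latter being the heart of the proof. Surjectivity is immediate from the definition of $\text{Inn}_{F}(G,\mu)$: every element has the form $f_{h}^{\mu}$ for some $h\in G$ (and is a genuine bijective fuzzy automorphism by Lemma $4.2$), and $\psi(h^{-1})=f_{h}^{\mu}$, so $\psi$ attains every element. For the kernel I would use the identity element $I_{e}^{\mu}$ of $\text{Inn}_{F}(G,\mu)$ from Lemma $4.5$ together with the equality $\equiv$ of fuzzy maps, i.e. coincidence of fuzzy images: $g\in\ker\psi$ precisely when $f_{g^{-1}}^{\mu}\equiv I_{e}^{\mu}$. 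Now the fuzzy image of $x$ under $f_{h}^{\mu}$ is $h^{-1}xh$, since $f_{h}^{\mu}(x,h^{-1}xh)=\mu(e)=1$, while the fuzzy image of $x$ under $I_{e}^{\mu}$ is $x$, since $I_{e}^{\mu}(x,x)=\mu(x^{-1}x)=\mu(e)=1$. Taking $h=g^{-1}$, the condition $f_{g^{-1}}^{\mu}\equiv I_{e}^{\mu}$ becomes $gxg^{-1}=x$ for all $x\in G$, which is exactly $g\in Z(G)$. Therefore $\ker\psi=Z(G)$.

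Finally, applying the First Isomorphism Theorem to the surjective homomorphism $\psi$ gives $G/Z(G)=G/\ker\psi\cong\text{Inn}_{F}(G,\mu)$, as claimed. The step I expect to be the main obstacle is the kernel computation, and specifically the bookkeeping around the relation $\equiv$: since the identity $I_{e}^{\mu}$ and the triviality condition are phrased in terms of fuzzy images rather than strict equality of fuzzy subsets, one must take $\text{Inn}_{F}(G,\mu)$ modulo $\equiv$ and verify that $\psi$ descends to a well-defined map on the resulting group before invoking the isomorphism theorem. Once Lemmas $4.3$ and $4.5$ are granted, the rest is a faithful transcription of the crisp argument.
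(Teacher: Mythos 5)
Your proposal is correct and follows essentially the same route as the paper: the same map $g \mapsto f_{g^{-1}}^{\mu}$, the same use of Lemma $4.3$ to get a homomorphism, the same surjectivity observation, the same kernel computation via fuzzy images showing $\ker = Z(G)$, and the same appeal to the first isomorphism theorem. Your closing remark about working modulo $\equiv$ is a reasonable extra precaution, but it addresses a subtlety the paper itself glosses over rather than a divergence in method.
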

\begin{proof}
Define  a map $$\zeta:G \to \ \text{Inn}_{F}(G, \mu): g \mapsto f_{g^{-1}}^{  \mu}.$$ First of all we show that $\zeta$ is a group homomorphism. Note that $$\zeta(g_1g_2) = f_{g_2^{-1}g_1^{-1}}^{ \mu} = f_{g_1^{-1}}^{  \mu}\circ f_{g_2^{-1}}^{  \mu} = \zeta(g_1)\circ \zeta(g_2)$$ which shows that  $\zeta$ is a group homomorphism. Clearly, $\zeta$ is onto. Now we show that Ker$(\zeta) = Z(G)$ where Ker$(\zeta)$ denotes the kernel of $\zeta$. Let $g \in$ Ker$(\zeta)$. This means $\zeta(g) = f_{g^{-1}}^{  \mu} = I_{e}^{ \mu}$. This shows that $ f_{g^{-1}}^{  \mu} \equiv I_{e}^{ \mu}$ and hence, fuzzy images of  $f_{g^{-1}}^{  \mu}$ and $I_{e}^{ \mu}$ are same i.e., $gxg^{-1} = x$ for all $x \in G$. Therefore we deduce that Ker$(\zeta) \subseteq Z(G)$. Now to prove the reverse inclusion, take any $z \in Z(G)$. Then $\zeta(z) = f_{z^{-1}}^{ \mu}$ but $$f_{z^{-1}}^{ \mu}(x, y) = \mu(x^{-1}z^{-1}yz) = \mu(x^{-1}y)= I_{e}^{ \mu}(x, y).$$
In particluar $f_{z^{-1}}^{  \mu}  \equiv I_{e}^{ \mu}$.  This shows that $Z(G)\subseteq$ Ker$(\zeta)$.  Thus,  Ker$(\zeta)= Z(G)$. So, by fundamental theorem of homomorphism, we have  $G/Z(G)\cong$ Inn$_{F}(G,  \mu)$.

\end{proof}
The above result is  analogue to its counterpart from the classical group theory as the isomorphism involved in it is usual group isomorphism. On the other hand, if we go for the fuzzy isomorphism $\underset{F}{\cong}$ between $G$ and Inn$_{F}(G,  \mu)$, then we have the following interesting result.
\begin{theorem}
For  a group $G$, we have $G \underset{F}{\cong}$ Inn$_{F}(G,  \mu)$. 
\end{theorem}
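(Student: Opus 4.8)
The plan is to upgrade the crisp group homomorphism $\zeta\colon g\mapsto f_{g^{-1}}^{\mu}$ used in the proof of Theorem $4.2$ into a genuine fuzzy map and then to check, clause by clause, the conditions that make it a fuzzy isomorphism. Concretely, I would introduce a fuzzy subset $\Phi\colon G\times\text{Inn}_{F}(G,\mu)\to[0,1]$ whose value-$1$ fibre over each $g\in G$ is exactly the inner automorphism $f_{g^{-1}}^{\mu}$; the natural recipe is to let $\Phi(g,f_{h}^{\mu})$ be $\mu$ evaluated at a word in $g$ and $h$ that collapses to $e$ precisely when $f_{h}^{\mu}\equiv f_{g^{-1}}^{\mu}$. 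Since $\mu(x)=1$ if and only if $x=e$, this forces each $g$ to possess a unique fuzzy image, so $\Phi$ qualifies as a fuzzy map in the sense of Definition $2.2$.

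The next step is to verify that $\Phi$ is a fuzzy homomorphism, i.e.\ that it satisfies the supremum identity of Definition $2.7$. The virtue of this construction is that essentially all of the underlying algebra is already available: the multiplicative behaviour is governed by Lemma $4.3$, which gives $f_{g_{1}}^{\mu}\circ f_{g_{2}}^{\mu}=f_{g_{2}g_{1}}^{\mu}$, by Lemma $4.6$, which gives $(f_{g}^{\mu})^{-1}\equiv f_{g^{-1}}^{\mu}$, and by the normality of $\mu$. Thus the supremum condition for $\Phi$ should reduce to a transcription of the computation already carried out in the proof of Lemma $4.1$, with the membership degrees supplied by $\mu$, exactly as the homomorphism property of $\zeta$ was derived in Theorem $4.2$. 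Fuzzy ontoness is then immediate, since every element of $\text{Inn}_{F}(G,\mu)$ is of the form $f_{h}^{\mu}$ and is the value-$1$ image of $h^{-1}\in G$, so the onto clause of Definition $2.3$ holds; the one-one clause would be read off from $\mu(x)=1\iff x=e$ in the style of the one-one arguments in Lemmas $4.2$ and $4.3$. Assembling these pieces would exhibit $\Phi$ as a bijective fuzzy homomorphism and yield $G\underset{F}{\cong}\text{Inn}_{F}(G,\mu)$.

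The step I expect to be the genuine obstacle is the one-one clause, because of the center. The preceding theorem already shows that it is the quotient $G/Z(G)$, and not $G$ itself, that is \emph{classically} isomorphic to $\text{Inn}_{F}(G,\mu)$, and indeed one checks directly that $f_{zg}^{\mu}=f_{g}^{\mu}$ for every $z\in Z(G)$, so the family $\{f_{g}^{\mu}\}$ contains only $|G/Z(G)|$ distinct maps. Consequently any two $g_{1},g_{2}$ differing by a central element share the same value-$1$ image under $\Phi$, and the naive reading of the one-one clause of Definition $2.3$ cannot hold once $Z(G)\neq\{e\}$. The delicate part of the write-up is therefore to make precise the fuzzy notion of injectivity implicit in $\underset{F}{\cong}$ that looks past $Z(G)$, and to ensure that $\Phi$ is well defined on $\text{Inn}_{F}(G,\mu)$ (equivalently, that the word chosen in the definition of $\Phi$ depends on $h$ only through $f_{h}^{\mu}$). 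This is exactly the place where the standing hypotheses $\mu(x)=1\iff x=e$ and the normality of $\mu$ must carry the argument; getting these to interact correctly with the center is the crux on which the whole theorem turns.
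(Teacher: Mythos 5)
Your construction is precisely the paper's own: the paper takes $\theta(a, f_b^{\mu}) = \mu(a^{-1}b^{-1})$, which is exactly the ``$\mu$ of a word in $g$ and $h$'' recipe you describe, and its verification of the homomorphism property and of ontoness runs just as you outline (via Lemma 4.3 and the normality of $\mu$). So up to the one-one clause you have reproduced the paper's argument. The difference is that you stop and flag the one-one clause as an unresolved obstacle, whereas the paper claims to dispose of it by computing $\mathrm{Ker}(\theta)=\{x \in G \, : \, \mu(x^{-1}e^{-1})=1\}=\{e\}$.

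Your suspicion is correct, and the gap you identify belongs to the paper, not just to your write-up. The paper's kernel computation evaluates $\theta(x, I_e^{\mu})$ using the particular label $b=e$; but $I_e^{\mu}=f_z^{\mu}$ as maps for every $z\in Z(G)$ (your observation that $f_{zg}^{\mu}=f_g^{\mu}$ for central $z$, which follows from normality of $\mu$), and computing with the label $z$ instead gives $\{x \, : \, \mu(x^{-1}z^{-1})=1\}=\{z^{-1}\}$. So $\theta$ is not even well defined as a function on $G\times\mathrm{Inn}_F(G,\mu)$ unless $Z(G)=\{e\}$; the paper's proof implicitly treats the elements $f_b^{\mu}$ as formal symbols indexed by $b\in G$, which is irreconcilable with its own Theorem 4.2, whose kernel computation $\mathrm{Ker}(\zeta)=Z(G)$ requires them to be actual maps. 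Moreover, no repair along the lines you sketch is possible in general: a one-one and onto fuzzy map $X\cdots\to Y$ induces a crisp bijection $x\mapsto y_x$ between the underlying sets, so for a finite group with $Z(G)\neq\{e\}$ Theorem 4.2 itself, giving $|\mathrm{Inn}_F(G,\mu)|=|G/Z(G)|<|G|$, rules out the existence of any bijective fuzzy map $G\cdots\to\mathrm{Inn}_F(G,\mu)$, whatever word one chooses in the definition of $\Phi$. The theorem as stated is correct only for groups with trivial center, in which case your plan (which is the paper's plan) goes through verbatim; your instinct that the center is ``the crux on which the whole theorem turns'' is exactly right.
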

\begin{proof}
Define a map $$\theta : G \cdots  \to \text{Inn}_{F}(G,  \mu)\ \text{by}\ (a, f_{b}^{ \mu}) \mapsto \mu(a^{-1}b^{-1}).$$Observe that $\theta(a, f_{a^{-1}}^{ \mu}) = \mu(a^{-1}a) = 1.$ This means $\theta$ is a fuzzy map which maps $a$ to $f_{a^{-1}}^{ \mu}$ uniquely. Now we show that $\theta$ is a fuzzy homomorphism. Let $a_1, a_2 \in G$ and $f_{c}^{ \mu} \in \text{Inn}_{F}(G,  \mu)$. So, we need to show that $$\theta(a_1a_2, f_{c}^{ \mu}) = \sup_{f_{x}^{ \mu}, f_{y}^{ \mu} \in \text{Inn}_{F}(G,  \mu)}\big\{\theta(a_1, f_{x}^{ \mu})\wedge \theta(a_2, f_{y}^{ \mu})\ \big|\ f_{c}^{ \mu}=f_{x}^{ \mu}\circ f_{y}^{ \mu}  \big\}.$$ Now, for $f_{x}^{ \mu}, f_{y}^{  \mu} \in \text{Inn}_{F}(G,  \mu)$ such that $\ f_{x}^{  \mu}\circ f_{y}^{  \mu} = f_{c}^{ \mu}$ we have $$\theta(a_1a_2, f_{c}^{  \mu})=\theta(a_1a_2, f_{x}^{  \mu}\circ f_{y}^{  \mu}) =
\theta(a_1a_2, f_{yx}^{ \mu}) = \mu(a_2^{-1}a_1^{-1}x^{-1}y^{-1}) = \mu(y^{-1}a_2^{-1}a_1^{-1}x^{-1})\hspace{20mm}$$ $$\geq \mu(y^{-1}a_2^{-1})\wedge \mu(a_1^{-1}x^{-1})= \mu(a_2^{-1}y^{-1})\wedge \mu(a_1^{-1}x^{-1})= \theta(a_1, f_{x}^{ \mu})\wedge \theta(a_2, f_{y}^{ \mu}).$$Above implies that  $$\theta(a_1a_2, f_{c}^{ \mu}) \geq \sup_{f_{x}^{ \mu}, f_{y}^{ \mu} \in \text{Inn}_{F}(G,  \mu)}\big\{\theta(a_1, f_{x}^{ \mu})\wedge \theta(a_2, f_{y}^{ \mu})\ \big|\ f_{c}^{ \mu}= f_{x}^{\mu}\circ f_{y}^{ \mu}  \big\}.$$Now, for the other way around, observe that $$\sup_{f_{x}^{ \mu}, f_{y}^{ \mu} \in \text{Inn}_{F}(G,  \mu)}\big\{\theta(a_1, f_{x}^{ \mu})\wedge \theta(a_2, f_{y}^{ \mu})\ \big|\ f_{c}^{ \mu}= f_{x}^{\mu}\circ f_{y}^{ \mu}  \big\}  \hspace{70mm}$$ $$=\sup_{f_{x}^{ \mu}, f_{y}^{ \mu} \in \text{Inn}_{F}(G,  \mu)}\big\{\theta(a_1, f_{x}^{ \mu})\wedge \theta(a_2, f_{y}^{ \mu})\ \big|\ f_{c}^{ \mu}= f_{yx}^{\mu}  \big\} \geq \theta(a_1, f_{a_1^{-1}}^{ \mu})\wedge \theta(a_2, f_{ca_1}^{ \mu})$$ $$ = \mu(a_1^{-1}a_1)\wedge \mu(a_2^{-1}a_1^{-1}c^{-1}) = \mu((a_1a_2)^{-1}c^{-1}) =\theta(a_1a_2, f_{c}^{ \mu}).  $$ Thus, $\theta$ is a fuzzy homomorphism.
Further, $$\text{Ker}(\theta) = \{x \in G\ : \ \theta(x, I_{e}^{ \mu}) = 1\} = \{x \in G\ : \ \mu(x^{-1}e^{-1}) = 1\} = \{e\}.$$ This implies that $\theta$ is one-one. Now take any $f_{g}^{ \mu} \in $Inn$_{F}(G,  \mu)$. Then as $g \in G$ implies that $\theta(g^{-1},  f_{g}^{ \mu}) = \mu(gg^{-1}) = 1$ which means that $\theta$ is onto and hence $\theta$ is a fuzzy isomorphism. 
\end{proof}

\bibliographystyle{plain}

\end{document}